\newtheorem{thm}{Theorem}[section]
\newtheorem*{thm*}{Theorem}
\newtheorem{lemma}[thm]{Lemma}
\newtheorem{corollary}[thm]{Corollary}
\newtheorem{defn}[thm]{Definition}
\theoremstyle{remark}
\newtheorem{rem}[thm]{Remark}
\newtheorem*{nota}{\bf Notation}
\newcommand{\mbb}{\mathbb}
\newcommand{\fra}{\mathfrak}
\newcommand{\Z}{\mbb{Z}}
\newcommand{\Q}{\mbb{Q}}
\newcommand{\D}{\mathcal{D}}
\newcommand{\I}{\mathcal{I}}
\newcommand{\OR}{\mathcal{O}}
\newcommand{\OM}{\mathcal{O}^{\times}}
\newcommand{\RM}{R^{\times}}
\newcommand{\UTO}{\textrm{UT}_n(\OR)}
\newcommand{\UTR}{\textrm{UT}_n(R)}
\newcommand{\TO}{\textrm{T}_n(\OR)}
\newcommand{\TR}{\textrm{T}_n(R)}
\newcommand{\TRf}{\textrm{T}_n(R,\bar{f})}
\newcommand{\SLZ}{\textrm{SL}_n(\Z)}
\newcommand{\SLR}{\textrm{SL}_n(R)}
\newcommand{\SLO}{\textrm{SL}_n(\mathcal{O})}
\newcommand{\GLO}{\textrm{GL}_n(\mathcal{O})}
\newcommand{\GLR}{\textrm{GL}_n(R)}
\newcommand{\diag}{\textrm{diag}}
\newcommand{\fraA}{\fra{A}}
\newcommand{\fraB}{\fra{B}}
\newcommand{\Lrings}{\mathcal{L}_{\text{rings}}}
\newcommand{\Lgroups}{\mathcal{L}_{\text{groups}}}
\newcommand{\one}{\textbf{1}}
\newcommand{\define}{\stackrel{\text{def}}{=}}
\newcommand{\ds}{\displaystyle}
\begin{document}
\title{Bi-interpretability with $\Z$ and models of the complete elementary theories of $\SLO$, $\TO$ and $\GLO$, $n\geq 3$}

\author{Alexei G. Myasnikov, Mahmood Sohrabi\footnote{Address: Stevens Institute of Technology, Department of Mathematical Sciences, Hoboken, NJ 07087, USA. Email: msohrab1@stevens.edu }}

\maketitle
\begin{abstract}\noindent Let $\OR$ be the ring of integers of a number field, and let $n\geq 3$. This paper studies bi-interpretability of the ring of integers $\Z$ with the special linear group $\SLO$, the general linear group $\GLO$ and solvable group of all invertible uppertriangular matrices over $\OR$, $\TO$. For each of these groups we provide a complete characterization of arbitrary models of their complete elementary theories.\\\\
{\bf 2010 MSC:} 03C60, 20F16\\
{\bf Keywords:} Elementary equivalence, Bi-interpretability, Special linear group, General linear group, Abelian deformation.  \end{abstract}

\section{Introduction and preliminaries} 
Let $\OR$ be the ring of integers of a number field, and $n\geq 3$ be a natural number. The main problem we tackle in this paper is to characterize arbitrary (pure) groups elementarily equivalent to the special linear group $\SLO$ over $\OR$, the general linear group $\GLO$ over $\OR$ and solvable group of all invertible uppertriangular matrices over $\OR$, $\TO$. 

Since Tarski and Mal'cev there has been many interesting results about elementary equivalence of finitely generated groups and rings. A question dominating research in this area has been if and when elementary equivalence between finitely generated  groups (rings) implies isomorphism. Recently, Lubotzky et. al. \cite{ALM} coined the term {\em first-order rigidity}: a finitely generated group (ring) $A$ is first-order rigid if any other finitely generated group (ring) elementarily equivalent to $A$ is isomorphic to $A$. Indeed a stronger version of rigidity, Quasi Finite Axiomatizability, QFA (See Definition~\ref{QFA:def}), due to A. Nies has been around over the past two decades and has been studied for various classes of groups and rings. Indeed, a large class of nilpotent and polycyclic groups~\cite{Oger2006, Lasserre}, all free metabilian groups of finite rank~\cite{Khelif}, the ring of integers $\Z$ (Sabbagh 2004, \cite{Nies2007}), polynomial rings $\Z[x_1, \ldots, x_m]$~ \cite{Nies2007}, and finitely generated fields of characteristic $ \neq 2$~\cite{Pop} are known to have the property. First-order rigidity of non-uniform higher dimensional lattices in semi-simple Lie groups, e.g. $\SLZ$, has been addressed in~\cite{ALM}, and finitely generated profinite groups are proved to have the property in \cite{Lub}.    

A typical technique in studying whether a finitely generated structure $A$ is QFA or first-order rigid is to study a stronger property: whether $A$ is bi-interpretable with the ring of integers $\Z$ (See Definition~\ref{bi-int:def}, and Theorem~\ref{Nies2007} below). We note that the integral Heisenberg group $\text{UT}_3(\Z)$ is QFA, but it is not bi-interpretable with $\Z$ (\cite{Khelif,Nies2007}). It turns out that studying if a finitely generated  group $G$ is bi-interpretable with $\Z$ or how it fails in that respect is also very useful in studying arbitrary groups (rings) elementarily equivalent to a given one. For example, in many cases arbitrary groups that are elementarily equivalent to $G$ seem to have a very particular structure; They are kind of  ``completions" or ``closures" of $G$ with respect to a ring $R$ elementarily equivalent to $\Z$. When dealing with classical groups or algebras such notions of completion or closure coincide with the classical ones, where completions have the same ``algebraic scheme", but the points are over the ring $R$ as above. In this paper we prove that this is the case for the group $G = \SLZ$, it is bi-interpretable with $\Z$ (Theorem~\ref{sln-biinter:thm} below), so it is first-order rigid. Moreover, any other group $H$ with $G \equiv H$ is isomorphic to $\SLR$ with $R \equiv \mathbb{Z}$.  On the other hand the ``extent" to which a group $G$ fails to be bi-interpretable with $\Z$ also often seems to affect the structure of arbitrary groups elementarily equivalent to $G$. Again it seems such groups are ``deformations" of ``exact completions" or ``exact closures" of $G$ over a ring $R$ as above. It only seems proper that these deformations can usually be captured by cohomological data (See \cite{beleg94,MS2010}). For example, the group $\TO$ is not bi-interpretable with $\Z$, if the ring $\OR$ of integers of a number field has an infinite group of units (though $\TO$ and $\OR$ are mutually interpretable in each other). In this case such failure is modulo the infinite center. We prove in this paper that any group $H$ with $H \equiv \TO$ is an ``abelian deformation" of a group $\TR$ where $R \equiv \OR$ (See Theorem~\ref{TO-elemchar:thm} below). The case of $\GLO$, where $\OR$ has infinite group of units is an exception. Even though, it is not bi-interpretable with $\Z$, in this paper we prove that all its models are of the type $\GLR$.

 As we remarked previously, \cite{ALM} contains a proof of the first-order rigidity of $\SLZ$, $n\geq 3$. The authors also prove that $\SLZ$ is prime. In the same paper authors announce that in a future work they would present a proof of the QFA property for $\SLZ$. The approach in that paper is not via bi-interpretability.

In a sequel to this paper we study the relevant questions, when $\OR$ is replaced by a field $F$, where $F$ is say, a number field (or in general a finitely generated field), or algebraically closed field.

 As to the organization of this paper, all results on $\SLO$ are collected in Section~\ref{slo:sec}, the ones on $\TO$ in Section~\ref{TO:sec} and those on $\GLO$ in Section~\ref{GLO:sec}. We fix our notation and state basic definitions and results in Section~\ref{prelim:sec}.  

\subsection{Preliminaries}
 \label{prelim:sec}

\subsubsection{Basic group-theoretic and ring-theoretic notation} 
For a group $G$ by $Z(G)=\{x\in G:xy=yx, \forall y\in G\}$ we denote the center of $G$. The derived subgroup $G'$ of $G$ is the subgroup of $G$ generated by all commutators $[x,y]=x^{-1}y^{-1}xy$ of elements $x$ and $y$ of $G$. We also occasionally use $x^y$ for $y^{-1}xy$, for $x$ and $y$ in $G$. For any element $g$ of $G$, $C_G(g)=\{ x\in G: [x,g]=\one \}$ is the centralizer of $g$ in $G$.

All rings in this paper are commutative associative with unit. We denote the ring of rational integers by $\Z$ and the field of rationals by $\Q$. By a \emph{number field} we mean a finite extension of $\Q$. By \emph{the ring of integers $\OR$ of a number field $F$} we mean the subring of $F$ consisting of all roots of monic polynomials with integer coefficients. For a ring $R$, by $\RM$ we mean the multiplicative group of invertible (unit) elements of $R$. By $R^+$ we mean the additive group of $R$. 

Consider the general linear group $\GLR$ over a commutative associative unitary ring $R$ and let $e_{ij}$, $1\leq i\neq j\leq n$, be the matrix with $ij$'th entry $1$ and every other entry $0$, and let $t_{ij}=\one+e_{ij}$, where $\one$ is the $n\times n$ identity matrix. Let also $t_{ij}(\alpha)=\one+\alpha e_{ij}$, for $\alpha\in R$.  The $t_{ij}$ as defined above are called \emph{transvections}. Let 
$$T_{ij}\define \{t_{ij}(\alpha):\alpha\in R\}$$ i.e. the $T_{ij}$ are \emph{one parameter subgroups generated by $t_{ij}$ over $R$}. The subgroups $T_{ij}$, where $1\leq i< j \leq n$, generate \emph{the subgroup $\UTR$ of $\GLR$ consisting of all upper unitriangular matrices}. 

For a fixed $n\geq 3$ we order all transvections $t_{ij}$, $1\leq i\neq j\leq n$, in a fixed but arbitrary way and denote the corresponding tuple by $\bar{t}$. If $\beta_1,\ldots,  \beta_m$ lists a set of elements of the ring $R$ we put a fixed but arbitrary order on the set of all transvections $t_{ij}(\beta_k)$, $1\leq i\neq j \leq n$, $k=1,\ldots ,m$, and denote the corresponding tuple by $\bar{t}(\bar{\beta})$.  

Let $\diag[\alpha_1, \ldots ,\alpha_n]$ be the $n\times n$ diagonal matrix with $ii$'th entry $\alpha_i\in \RM$. The subgroup $D_n(R)$ consists precisely of these elements as the $\alpha_i$ range over $\RM$. Now, consider the following diagonal matrices, the \emph{dilations},
\[d_i(\alpha)\stackrel{\text{def}}{=} \diag[1,\ldots ,  \underbrace{\alpha}_{i'\text{th}}, \ldots, 1],\]
and let us set \[d_i\define d_i(-1).\] Clearly the $d_i(\alpha)$ generate $D_n(R)$ as $\alpha$ ranges over $\RM$. 
The elements $d_k(\alpha)$, $1\leq k \leq n$, $\alpha\in \RM$ and $t_{ij}(\beta)$, $1\leq i<j\leq n$, $\beta\in R$, generate \emph{the subgroup $\TR$ of $\GLR$ consisting of all invertible upper triangular matrices}.

\subsubsection{Bi-interpretability, Quasi-finite axiomatizability and Primeness}
For basics of Model Theory our reference is~\cite{hodges}. We denote the first-order language of groups by $\Lgroups$ and the first-order language of unitary rings is denoted by $\Lrings$. When a structure $\fraA$ is definable or interpretable in a structure $\fraB$ with parameters $\bar{b}=(b_1, \ldots, b_n)\in |\fraB|^n$ we say that $\fraA$ is interpretable in $(\fraB, \bar{b})$. 

\begin{defn}\label{bi-int:def}
Consider structures $\fraA$ and $\fraB$ in possibly different signatures. Assume
$\fraA$ is interpretable in $\fraB$ via an interpretation $\Delta$,
$\fraB$ is interpretable in $\fraA$ via an interpretation $\Gamma$, $\tilde{\fraA}$ is the isomorphic copy of $\fraA$ \textit{defined} in itself via $\Gamma \circ \Delta$, and $\tilde{\fraB}$ is an isomorphic copy of $\fraB$ \textit{defined} in itself via $\Delta \circ \Gamma$. We say that 
$\fraA$ is bi-interpretable with $\fraB$ if there exists an isomorphism $\fraA\cong \tilde{\fraA}$ which is first-order definable in $\fraA$ and there exists an isomorphism $\fraB\cong \tilde{\fraB}$ which is first-order definable in $\fraB$. 	
\end{defn}	

%
\begin{defn}
	A structure $\fraA$ is said to be prime if $\fraA$ is isomorphic to an elementary submodel of each model $\fraB$ of $Th(\fraA)$. 
			\end{defn}
	\begin{defn}\label{QFA:def}	Fix a finite signature. An infinite f.g. structure is Quasi Finitely Axiomatizable (QFA) if there exists a first-order sentence $\Phi$ of the signature such that 
		\begin{itemize}
			\item $\fraA\models \Phi$
			\item If $\fraB$ is a f.g. structure in the same signature and $\fraB\models \Phi$ then $\fraA\cong \fraB$.
		\end{itemize} \end{defn}
\begin{thm}[See \cite{Nies2007}, Theorem 7.14]\label{Nies2007} If $\fraA$ is a structure with finite signature which is bi-interpretable (possibly with parameters) with $\Z$, then $\fraA$ is prime. If in addition, $\fraA$ is finitely generated, then it is QFA. \end{thm}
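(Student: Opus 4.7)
The plan is to transport the two known model-theoretic properties of $\Z$ across the bi-interpretation: primeness of $\Z$ (Sabbagh 2004, cited in the excerpt) and QFA of $\Z$, witnessed by a sentence $\sigma_\Z$ which holds in $\Z$ and in no other finitely generated ring. The main engine is the functoriality of interpretations: if $\iota\colon M\hookrightarrow N$ is an elementary embedding and $\Lambda$ is an interpretation scheme with parameters, then $\Lambda(M)\hookrightarrow\Lambda(N)$ is elementary. A secondary engine is that the statement ``the given formula defines an isomorphism of the ambient structure with $\Delta\circ\Gamma$ applied to it, for these parameters'' is a first-order sentence about the ambient structure, hence survives passage to any elementarily equivalent structure.

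For primeness, let $\fraB\models\Th(\fraA)$. The parameters $\bar a$ of $\Gamma$ in $\fraA$ satisfy finitary first-order conditions asserting that they make $\Gamma$ interpret a genuine ring; by $\fraA\equiv\fraB$ these conditions are realized by some $\bar b$ in $\fraB$, so $\Gamma$ with parameters $\bar b$ interprets a ring $R_\fraB$ in $\fraB$ with $R_\fraB\equiv\Z$. By primeness of $\Z$ fix an elementary embedding $\iota\colon\Z\preceq R_\fraB$. Apply the scheme $\Delta$: functoriality yields an elementary embedding $\Delta(\Z)\preceq\Delta(R_\fraB)$. By bi-interpretability in $\fraA$ the source is definably isomorphic to $\fraA$, and by first-order transfer of the bi-interpretability sentence to $\fraB$, the target is definably isomorphic to $\fraB$. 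Composing, one obtains the required $\fraA\preceq\fraB$.

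For QFA, let $\Phi$ be the conjunction of: (i) there exist parameters for which $\Gamma$ defines a ring satisfying $\sigma_\Z$; and (ii) the composite scheme $\Delta\circ\Gamma$ produces, with prescribed parameters, a structure together with the specific formula-defined isomorphism back to the ambient one required by bi-interpretability. Then $\fraA\models\Phi$. Suppose $\fraB$ is finitely generated and $\fraB\models\Phi$; let $R_\fraB$ be the ring $\Gamma$ defines in $\fraB$, so $R_\fraB\models\sigma_\Z$. If one knows $R_\fraB$ is finitely generated as a ring, QFA of $\Z$ gives $R_\fraB\cong\Z$, whence $\Delta(R_\fraB)\cong\Delta(\Z)\cong\fraA$, and clause (ii) in $\fraB$ supplies $\fraB\cong\Delta(R_\fraB)\cong\fraA$.

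The main obstacle is the finite generation of $R_\fraB$. The route I would take: the inverse scheme $\Delta$ represents every element of $\fraB$ by a tuple over $R_\fraB$ in a uniformly definable way, so generators $b_1,\dots,b_m$ of $\fraB$ are encoded by a finite set $\bar r\subset R_\fraB$. Given any $\alpha\in R_\fraB$, the element $\alpha$ also encodes, via the definable isomorphism of (ii) and the fact that the ring operations on $R_\fraB$ correspond to first-order operations on the coding tuples in $\fraB$, an object built from $b_1,\dots,b_m$ using the group/structure operations of $\fraB$; unwinding this shows $\alpha$ lies in the subring of $R_\fraB$ generated by $\bar r$ together with the finitely many parameters of $\Gamma$ and $\Delta$. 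Pinning down that this subring in fact exhausts $R_\fraB$ is the technical heart of the argument and the place where the full strength of bi-interpretability, rather than mere mutual interpretability, is indispensable: one needs the definable isomorphism of (ii) to force every element of $R_\fraB$ to be captured by a coding of an element of $\fraB$ lying in the finitely generated subuniverse $\langle b_1,\dots,b_m\rangle$.
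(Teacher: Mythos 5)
This theorem is not proved in the paper: it is a cited result, attributed to Nies (Theorem~7.14 of~\cite{Nies2007}), and the paper proceeds directly to use it. So there is no in-paper argument to compare against, and your proposal must stand on its own.

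Your primeness argument is essentially correct and is the standard transfer-of-primeness argument: push the elementary embedding $\Z\preceq R_\fraB$ through $\Delta$ (interpretations carry elementary maps to elementary maps), then conjugate by the two definable bi-interpretation isomorphisms to land back in $\fraA\preceq\fraB$. One should be slightly careful that the parameters $\bar b$ realized in $\fraB$ are chosen to satisfy \emph{all} the first-order conditions needed (not merely that $\Gamma$ interpret a ring, but that the composite $\Delta\circ\Gamma$ with those parameters yields a definably isomorphic copy); you acknowledge this, and since the relevant conditions are all finitary and first-order, the argument goes through.

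The QFA argument, however, has a genuine gap, and it is precisely the one you flag as the ``technical heart.'' The route you sketch for finite generation of $R_\fraB$ does not work as stated. The heuristic is: generators $b_1,\dots,b_m$ of $\fraB$ get coded by a finite tuple $\bar r$ in $R_\fraB$, and an arbitrary $\alpha\in R_\fraB$ codes an element of $\fraB$ built from the $b_i$ by structure operations, so ``unwinding'' should put $\alpha$ in the subring of $R_\fraB$ generated by $\bar r$ and the parameters. But the interpretation schemes relate elements of $\fraB$ and $R_\fraB$ via arbitrary first-order definable relations, not via ring terms; a first-order definable function on a model of $\mathrm{Th}(\Z)$ can (and typically does) take values far outside the subring generated by its arguments. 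Concretely, $\Q$ is interpretable in $\Z$, yet $\Q$ is not finitely generated as a ring even though $\Z$ is; so ``$\fraB$ finitely generated, $R_\fraB$ interpretable in $\fraB$'' gives no control whatsoever on generation of $R_\fraB$, and even the full strength of bi-interpretability does not, by the mechanism you describe, pin $\alpha$ to lie in a finitely generated subring. Closing this gap requires a genuinely different idea (in Nies's proof it passes through the arithmetical definability, inside the interpreted copy of $\Z$, of term evaluation relative to the coded generators, which lets one write a single sentence forcing any finitely generated model of it to coincide with the structure generated by the elements with the prescribed codes). As written, your proposal reduces QFA to an unproved and non-obvious claim, so it is not a complete proof of the second half of the statement.
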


\begin{thm}\label{aut-biint:thm} If $\fraA$ and $\fraB$ are structures bi-interpretable with each other, then $\text{Aut}(\fraA)\cong \text{Aut}(\fraB)$, that is, the automorphism groups of the two structures are isomorphic.\end{thm}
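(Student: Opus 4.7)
The plan is to construct group homomorphisms $\Phi:\mathrm{Aut}(\fraA)\to\mathrm{Aut}(\fraB)$ and $\Psi:\mathrm{Aut}(\fraB)\to\mathrm{Aut}(\fraA)$ directly from the two interpretations, and then use the bi-interpretability hypothesis to show they are mutually inverse. The construction of $\Phi$ from $\Gamma$ is the standard one: an automorphism $\sigma\in\mathrm{Aut}(\fraA)$ acts coordinate-wise on tuples in $|\fraA|^k$, and because the domain $U_\Gamma\subseteq|\fraA|^k$, the equivalence relation $E_\Gamma$, and the interpretations of the $\fraB$-symbols are all $\fraA$-definable (they are part of the datum of $\Gamma$), this coordinate-wise action descends to an automorphism $\sigma_\Gamma$ of the interpreted structure $\Gamma(\fraA)$. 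Composing with the canonical isomorphism $p_\Gamma:\Gamma(\fraA)\xrightarrow{\sim}\fraB$ provided by the interpretation, set $\Phi(\sigma):=p_\Gamma\circ\sigma_\Gamma\circ p_\Gamma^{-1}$, and define $\Psi$ analogously from $\Delta$.

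That $\Phi$ is a group homomorphism reduces to the obvious functoriality $(\sigma\sigma')_\Gamma=\sigma_\Gamma\circ\sigma'_\Gamma$, since coordinate-wise action on tuples respects composition. The substantive part of the proof is to show $\Psi\circ\Phi=\mathrm{id}_{\mathrm{Aut}(\fraA)}$ (and symmetrically $\Phi\circ\Psi=\mathrm{id}_{\mathrm{Aut}(\fraB)}$). Unpacking the definitions, one checks that $\Psi(\Phi(\sigma))$ equals the automorphism obtained by letting $\sigma$ act coordinate-wise on the iterated interpretation $\tilde{\fraA}=\Delta(\Gamma(\fraA))$ — the copy of $\fraA$ defined in itself via $\Gamma\circ\Delta$ in the notation of the excerpt — and then conjugating by the composite coordinate isomorphism $p_{\Delta\Gamma}=p_\Delta\circ\tilde{p}_\Gamma:\tilde{\fraA}\xrightarrow{\sim}\fraA$; that is,
\[
\Psi(\Phi(\sigma))\;=\;p_{\Delta\Gamma}\circ\sigma_{\Delta\Gamma}\circ p_{\Delta\Gamma}^{-1}.
\]

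Here the bi-interpretability hypothesis enters decisively. By assumption, the isomorphism $\fraA\cong\tilde{\fraA}$, equivalently the map $p_{\Delta\Gamma}^{-1}:\fraA\to\tilde{\fraA}$, is $\fraA$-definable. Any automorphism $\sigma$ of $\fraA$ preserves its graph, which forces the intertwining relation $\sigma\circ p_{\Delta\Gamma}=p_{\Delta\Gamma}\circ\sigma_{\Delta\Gamma}$, whence $\sigma=p_{\Delta\Gamma}\circ\sigma_{\Delta\Gamma}\circ p_{\Delta\Gamma}^{-1}=\Psi(\Phi(\sigma))$. The symmetric argument in $\fraB$, using the definable isomorphism $\fraB\cong\tilde{\fraB}$, yields $\Phi\circ\Psi=\mathrm{id}$, and so $\Phi$ is the desired isomorphism of automorphism groups.

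The anticipated difficulty is not conceptual but bookkeeping: one must verify carefully that $\Psi(\Phi(\sigma))$ genuinely equals conjugation by the composite coordinate map $p_{\Delta\Gamma}$, which amounts to checking that coordinate-wise action and composition of interpretations are compatible in the expected way. A second minor issue, not addressed in the excerpt's definition, is parameters: if $\Gamma$ and $\Delta$ involve parameters $\bar a\in|\fraA|$ and $\bar b\in|\fraB|$, then $\Phi$ and $\Psi$ should be taken between the subgroups of automorphisms fixing those parameters, which is the customary formulation for bi-interpretability with parameters. Modulo this standard caveat the argument goes through exactly as above.
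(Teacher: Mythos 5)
The paper states Theorem~\ref{aut-biint:thm} without proof, treating it as a standard fact from the folklore of interpretations, so there is no in-paper argument to compare against; your proof is the usual one and it is correct. The construction $\Phi(\sigma)=p_\Gamma\circ\sigma_\Gamma\circ p_\Gamma^{-1}$, the homomorphism property via functoriality of the coordinate-wise action, and the decisive use of the hypothesis that the isomorphism $\fraA\cong\tilde\fraA$ is $\fraA$-definable --- so that its graph is preserved by every automorphism, forcing $\sigma\circ p_{\Delta\Gamma}=p_{\Delta\Gamma}\circ\sigma_{\Delta\Gamma}$ and hence $\Psi\circ\Phi=\mathrm{id}$ --- are all exactly right, with the symmetric argument on the $\fraB$ side closing the loop. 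The bookkeeping step you flag (that $\Psi(\Phi(\sigma))$ really is conjugation by $p_{\Delta\Gamma}=p_\Delta\circ\tilde p_\Gamma$) does need an honest unwinding of the composite interpretation, but it comes down to observing that $(\Phi(\sigma))_\Delta$, being the coordinate-wise action of $p_\Gamma\circ\sigma_\Gamma\circ p_\Gamma^{-1}$ on $|\fraB|^\ell$, factors as $\tilde p_\Gamma\circ\sigma_{\Delta\Gamma}\circ\tilde p_\Gamma^{-1}$; once that is written out the conjugations telescope.

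Your closing caveat about parameters is not a side remark here but the correct reading of the theorem as the paper uses it. Definition~\ref{bi-int:def} and the convention stated just before it allow interpretations with parameters, and then the theorem must be understood as an isomorphism between the pointwise stabilizers $\text{Aut}(\fraA,\bar a)\cong\text{Aut}(\fraB,\bar b)$ of the respective parameter tuples; without that restriction $\sigma_\Gamma$ need not even be an automorphism of $\Gamma(\fraA)$. This is precisely how the authors invoke the result in the proof of Lemma~\ref{O-biint-Z:lem}: $(\OR,\bar\beta)$ and $\Z$ are bi-interpretable and both are automorphically rigid, whereas $\OR$ alone is not, so the parameters $\bar\beta$ cannot be dropped. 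Your proof, applied to the parameter-fixing subgroups, is exactly what justifies that step.
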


\subsubsection{Bi-interpretability of rings of integers of number fields and $\Z$}
The following is a known result. 
\begin{lemma}\label{O-biint-Z:lem} Assume $\OR$ is the ring of integers of a number field $F$ of degree $m$ and $\beta_1, \ldots , \beta_m$ generate it as a $\Z$-module. Then $(\OR, \bar{\beta})$ and $\Z$ are bi-interpretable. 
 \end{lemma}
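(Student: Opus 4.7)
The plan is to exhibit an interpretation in each direction and then check that the two round-trip isomorphisms are definable in their source structures.

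For the interpretation of $(\OR,\bar\beta)$ in $\Z$, I would use that $\OR$ is a free $\Z$-module of rank $m$ with basis $\bar\beta$, so that $\beta_i\beta_j = \sum_k c^k_{ij}\beta_k$ for fixed integer structure constants $c^k_{ij}$. Take $\Z^m$ as the interpreted universe, with componentwise addition and the bilinear product $(\bar a\cdot\bar b)_k = \sum_{i,j}c^k_{ij}\,a_ib_j$; each constant $\beta_i$ is interpreted as the standard basis vector $e_i\in\Z^m$. This yields a parameter-free interpretation $\Delta$ of $(\OR,\bar\beta)$ in $\Z$.

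For the reverse direction, the only nontrivial input is the classical theorem of Julia Robinson that $\Z$ is $\emptyset$-definable as a subring of $\OR$ whenever $\OR$ is the ring of integers of a number field. This supplies an interpretation $\Gamma$ of $\Z$ in $(\OR,\bar\beta)$, with the parameters $\bar\beta$ available but not actually needed at this stage.

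It remains to check the round-trip isomorphisms. Starting from $\Z$, the composition $\Delta\circ\Gamma$ realizes $\Z$ inside $\Z^m$ as $\{(c_1n,\dots,c_mn):n\in\Z\}$, where $1_\OR = \sum_i c_i\beta_i$ with fixed $c_i\in\Z$; the isomorphism $n\mapsto(c_1n,\dots,c_mn)$ is $\Z$-definable by a quantifier-free integer-linear formula. Starting from $(\OR,\bar\beta)$, the composition $\Gamma\circ\Delta$ sends $x\in\OR$ to the unique tuple $(a_1,\dots,a_m)\in\Z^m$ with $x=\sum_i a_i\beta_i$; its graph is defined in $(\OR,\bar\beta)$ by $\exists a_1,\dots,a_m\,\bigl(\bigwedge_i a_i\in\Z \,\wedge\, x=\textstyle\sum_i a_i\beta_i\bigr)$, where ``$a_i\in\Z$'' abbreviates Robinson's formula, and the $\beta_i$ appear as parameters. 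The main, and really only, obstacle is invoking Robinson's theorem; everything else is routine bookkeeping once the structure-constant interpretation is set up.
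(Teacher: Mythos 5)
Your proposal is correct and takes essentially the same route as the paper: the structure-constant interpretation of $\OR$ in $\Z$, and Julia Robinson's parameter-free definition of $\Z$ in $\OR$, are exactly the two ingredients the paper uses. The only difference is bookkeeping: the paper invokes Nies' Proposition~7.12 to reduce the verification to a single definable isomorphism $f\colon \OR\to M$ on the $\OR$-side, whereas you check both round-trip isomorphisms directly from the definition of bi-interpretability, which is equally valid.
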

\begin{proof}
	By~(\cite{Nies2007}, Proposition~7.12) we need to prove that $\OR$ is interpretable in $\Z$ and there is a definable copy $M$ of $\Z$ in $\OR$ together with an isomorphism $f: \OR \to M$ which is definable in $\OR$.
	
The ring $\OR$ is interpreted in $\Z$ by the $m$-dimensional interpretation $\Delta$: 
$$x=\sum_{i=1}^m a_i \beta_i \mapsto  (a_1,\ldots , a_m)$$ where 
$\Z^m$ is equipped with the ring structure: 
$$ e_i\cdot e_j =_\Z(c_{ij1},c_{ij2},\ldots, c_{ijm})\Leftrightarrow \beta_i \cdot \beta_j =_\OR \sum_{k=1}^m c_{ijk}\beta_k$$
and $e_i= (0,\ldots, 0,\underbrace{1}_{\text{$i$'th}},0,\ldots,0)$, for $i=1,\ldots,m$. On the other hand $\Z$ is defined in $\OR$ without parameters as $\Z \cdot 1_\OR$ by the well-known result of Julia Robinson~\cite{julia}.  So we can take $M= \prod_{i=1}^m \Z\cdot 1_\OR$ with $f(x)$ defined as
$$f(x)=(a_1\cdot 1_\OR, \ldots, a_m\cdot 1_\OR) \Leftrightarrow x=\sum_{i=1}^m a_i \beta_i$$  
which is obviously definable in $\OR$. 

By Theorem~\ref{aut-biint:thm} we can not get ride of the parameters, since $\OR$ is not automorphically rigid while $\Z$ is such.
\end{proof} 
\section{ The case of $\SLO$}\label{slo:sec}
\subsection{Bi-interpretability of $\OR$ and $\SLO$}
  
  We firstly point out that by~\cite{Car-Kell} the group $\SLO$ is boundedly generated by the one parameter subgroups $T_{ij}$ generated by the transvections $t_{ij}$. Also, the transvections satisfy the well-known Steinberg relations:
  \begin{enumerate}
  	\item $t_{ij}(\alpha)t_{ij}(\beta)=t_{ij}(\alpha+\beta), \forall \alpha,\beta\in R.$
  	\item $\ds [t_{ij}(\alpha),t_{kl}(\beta)]=\left\{\begin{array}{ll}
  	t_{il}(\alpha\beta)  & \text{if } j=k \\
  	t_{kj}(-\alpha\beta)  & \text{if } i=l \\
  	\one &\text{if } i\neq l, j\neq k
  	\end{array}\right.$
  	for all $\alpha,\beta\in R$.
  \end{enumerate} 
In addition to these, the $t_{ij}$ satisfy finitely many other relations depending on $\OR$. But, those are not the concern of this paper and we don't need referring to them explicitly.

\begin{lemma}\label{cent-sln:lem}
For $G=\SLO$ or $G=\GLO$ and any transvection $t_{kl}\in G$, $Z(C_G(t_{kl}))=T_{kl}\times Z(G)$ . 
\end{lemma}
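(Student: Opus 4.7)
The plan is to split the proof into the two inclusions. The inclusion $T_{kl}\times Z(G)\subseteq Z(C_G(t_{kl}))$ is the easy one: for any $A\in C_G(t_{kl})$ the relation $A(\one+e_{kl})=(\one+e_{kl})A$ is equivalent to $Ae_{kl}=e_{kl}A$, and I would use this to conclude that $A$ commutes with every $t_{kl}(\alpha)=\one+\alpha e_{kl}$, giving $T_{kl}\subseteq Z(C_G(t_{kl}))$. Since $Z(G)\subseteq Z(C_G(t_{kl}))$ trivially, and non-identity elements of $T_{kl}$ are non-scalar while $Z(G)$ consists of scalar matrices, the intersection is trivial, which yields the claimed internal direct product.

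For the reverse inclusion the first move is to locate enough transvections inside $C_G(t_{kl})$ to cut out the centralizer's center. By the Steinberg relations listed as item~(2) above, $t_{pq}$ commutes with $t_{kl}$ whenever $p\neq l$ and $q\neq k$; in particular $t_{ki}\in C_G(t_{kl})$ for every $i\neq k,l$ and $t_{jl}\in C_G(t_{kl})$ for every $j\neq k,l$. Now fix $A\in Z(C_G(t_{kl}))$. A direct entry-by-entry reading of $Ae_{kl}=e_{kl}A$ (which already holds because $A\in C_G(t_{kl})$) forces column $k$ and row $l$ of $A$ to vanish off the diagonal and $A_{kk}=A_{ll}=:\lambda$.

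Next, since $A$ centralizes the transvections just listed, I would perform the analogous entry analysis for $Ae_{ki}=e_{ki}A$ with $i\neq k,l$, which forces row $i$ to vanish off the diagonal with $A_{ii}=\lambda$, and for $Ae_{jl}=e_{jl}A$ with $j\neq k,l$, which forces column $j$ to vanish off the diagonal with $A_{jj}=\lambda$. Taken together, every row except row $k$ and every column except column $l$ is zero off the diagonal with diagonal entry $\lambda$, pinning down $A=\lambda\one+\mu e_{kl}$ for some $\mu\in\OR$. Since $A$ is upper triangular with constant diagonal $\lambda$, in $\GLO$ invertibility forces $\lambda\in\OM$, while in $\SLO$ the relation $\det A=\lambda^n=1$ forces $\lambda$ to be a root of unity and hence a unit; in either case $\lambda\one\in Z(G)$ and $A=(\lambda\one)\cdot t_{kl}(\mu/\lambda)\in Z(G)\cdot T_{kl}$.

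The only genuinely computational step is the unraveling of equations of the form $Ae_{pq}=e_{pq}A$ into entry equalities, which I would carry out once in full and then invoke by symmetry for the three transvection families. I do not anticipate any real obstacle: the Steinberg relations supply exactly the family of centralizing transvections needed, and the successive row/column constraints they impose collapse $A$ onto the two-parameter family $\lambda\one+\mu e_{kl}$ with no slack.
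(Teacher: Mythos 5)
Your argument is correct and follows essentially the same route as the paper: compute $C_G(t_{kl})$ entrywise from $A e_{kl}=e_{kl}A$, then use transvections that (by the Steinberg relations) lie inside $C_G(t_{kl})$ to force an element of $Z(C_G(t_{kl}))$ into $\lambda\one+\mu e_{kl}$. Your choice of the subfamilies $t_{ki}$ ($i\neq k,l$) and $t_{jl}$ ($j\neq k,l$) lets you read off that all diagonal entries are equal directly, whereas the paper reaches the same conclusion via a short contradiction; this is a cosmetic, not a substantive, difference.
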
 
\begin{proof}
For $x=(x_{ij})\in G$ a direct calculation imposing $xt_{kl}=t_{kl}x$ shows that
every non-diagonal entry of the $k$'th column and $l$'th row of $x$ has to be zero, and $x_{kk}=x_{ll}$, that is,
 $$x\in C_G(t_{kl}) \Leftrightarrow \left\{ \begin{array}{ll} x_{kk}=x_{ll} & ~ \\
 x_{ij}=0 &\text{ if } i\neq j, \text{ and either } j=k \text{ or } i=l\end{array}\right.$$ In particular every $t_{il}$, $t_{kj}$, and every $t_{ij}$ where $i\neq k$ and $j\neq l$ belongs to $C_G(t_{kl})$. Therefore, 

\begin{equation*}
\label{centofcent:eqn}
Z(C_G(t_{kl}))\leq \left(\bigcap_{1\leq i\neq l\leq n} C_G(t_{il})\right)\cap \left(\bigcap_{1\leq j\neq k\leq n} C_{G}(t_{kj})\right)\cap \left(\bigcap_{1\leq i\neq k, j\neq l\leq n}C_G(t_{ij})\right)\end{equation*}    
So, we have  
$$x\in Z(C_G(t_{kl})) \Rightarrow \left\{\begin{array}{ll} x_{ij}=0 &\text{ if } i\neq j \text{ and } (i,j)\neq (k,l)\\
x_{ii}=x_{jj} &\text{ if } i\neq l \text{ or } j\neq k\end{array}\right. $$

To finish the proof we need to show that for all $i$ and $j$, $x_{ii}=x_{jj}$. Note that if $x_{kk}\neq x_{ii}$ for some $i\neq k,l$, then without loss of generality we can assume that $x_{ii}=1$ if $i\neq l,k$ and $x_{kk}=x_{ll}=-1$. Then, $x\notin C_{G}(t_{il})$, $i\neq k$ even though $t_{il}\in C_G(t_{kl})$ for such choice of $i$. Therefore, $x_{ii}=x_{jj}$ for all $i$ and $j$. Now, $x\in Z(C_G(t_{kl}))$ if and only if $x_{ii}=x_{jj}$ for all $i$ and $j$, and $x_{ij}=0$, if $i\neq j$, and $(i,j)\neq (l,k)$. Indeed, we proved that
$$Z(C_G(t_{kl}))=T_{kl}\times Z(G).$$
\end{proof} 
\begin{corollary}\label{Tij-defn:cor}	For any $k\neq l$, $T_{kl}$ is definable in $(G,\bar{t})$ if $G=\GLO$ or $G=\SLO$.
\end{corollary}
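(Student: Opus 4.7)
The plan is to extract $T_{kl}$ from the decomposition $Z(C_G(t_{kl})) = T_{kl} \times Z(G)$ provided by Lemma~\ref{cent-sln:lem}. First I would note that $Z(G)$ is definable without parameters, and that by the lemma applied to any entry $t_{rs}$ of $\bar{t}$, the subset $Z(C_G(t_{rs})) = T_{rs}\cdot Z(G)$ is definable in $(G, t_{rs})$. This already exhibits a definable supergroup of $T_{kl}$, but one inflated by the (possibly infinite, e.g.\ in the $\GLO$ case when $\OM$ is infinite) center $Z(G)$; the task is therefore to strip off this central factor.

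To do so I would exploit the Steinberg commutator identity $[t_{ki}(\alpha), t_{il}(\beta)] = t_{kl}(\alpha\beta)$. Since $n \geq 3$, there exists an index $i \in \{1,\ldots,n\} \setminus \{k,l\}$, and the transvections $t_{ki}$ and $t_{il}$ both appear in the tuple $\bar{t}$. Applying Lemma~\ref{cent-sln:lem} with these new parameters gives the definable sets $Z(C_G(t_{ki})) = T_{ki}\cdot Z(G)$ and $Z(C_G(t_{il})) = T_{il}\cdot Z(G)$. An arbitrary element $a$ of the first has the form $t_{ki}(\alpha)\, c$ with $c \in Z(G)$, and likewise any $b$ in the second has the form $t_{il}(\beta)\, d$ with $d \in Z(G)$. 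Because $c$ and $d$ commute with everything in $G$, they drop out of the commutator, leaving
\[
[a, b] \;=\; [t_{ki}(\alpha)\,c,\, t_{il}(\beta)\,d] \;=\; [t_{ki}(\alpha), t_{il}(\beta)] \;=\; t_{kl}(\alpha\beta).
\]

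Finally I would conclude that this set of commutators is exactly $T_{kl}$: every $t_{kl}(\gamma) \in T_{kl}$ arises via the choice $\alpha = \gamma$, $\beta = 1$. Hence
\[
T_{kl} \;=\; \{\, [a,b] : a \in Z(C_G(t_{ki})),\ b \in Z(C_G(t_{il}))\,\},
\]
which is first-order definable in $G$ with the parameters $t_{ki}, t_{il}$ taken from $\bar{t}$. I do not anticipate a real obstacle here: the argument is a direct combination of Lemma~\ref{cent-sln:lem} with the Steinberg identity, and the only small point to verify is that the product map $\OR \times \OR \to \OR$ is surjective, which is immediate since $\OR$ is unital. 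The essential use of the assumption $n \geq 3$ is simply to guarantee the existence of the auxiliary index $i$ different from both $k$ and $l$.
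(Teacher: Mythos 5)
Your proof is correct and takes essentially the same route as the paper: both isolate $T_{kl}$ from $Z(C_G(\cdot)) = T_{\cdot}\times Z(G)$ by a commutator that kills the central factor via the Steinberg relation. The only cosmetic difference is that the paper commutes $Z(C_G(t_{kj}))$ against the single parameter $t_{jl}$, i.e. $T_{kl}=[Z(C_G(t_{kj})),t_{jl}]$, whereas you commute two definable sets $Z(C_G(t_{ki}))$ and $Z(C_G(t_{il}))$ against each other; both yield exactly $T_{kl}$.
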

\begin{proof}
Just note that for $k\neq l$, there exists $j\neq k,l$, such that $$T_{kl}=[Z(C_G(t_{kj})),t_{jl}]=[t_{kj},Z(C_G(t_{jl}))].$$
\end{proof}

\begin{nota}
In the following we view one parameter subgroups $T_{kl}$ as a cyclic two-sorted module: $T_{kl}^\OR\define \langle T_{kl}, \OR, \delta\rangle$, where $\delta$ describes the action $\OR$ on $T_{kl}$, that is,
$$y= \delta ( t_{kl}(\alpha), \beta) \Leftrightarrow y= t_{kl}(\alpha\cdot\beta),  \alpha, \beta \in \OR$$
\end{nota}

\begin{lemma}\label{O-interpret-SLO:lem} For any $\OR$, The two-sorted module $T_{kl}^\OR$ is interpretable in $(G,\bar{t})$, $G=\SLO,\GLO$ where the action of $\OR$ on the $t_{kl}$ respects commutation, i.e. for any $k\neq l$ and $\alpha\in \OR$
	$$[t_{kj}(\alpha),t_{jl}]=[t_{kj},t_{jl}(\alpha)]=t_{kl}(\alpha).$$ 
\end{lemma}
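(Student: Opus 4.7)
The plan is to exhibit both sorts of $T_{kl}^\OR$ uniformly inside $(G,\bar{t})$ on the single definable set $T_{kl}$. By Corollary~\ref{Tij-defn:cor}, each subgroup $T_{ij}$ is first-order definable in $(G,\bar{t})$; in particular this holds for $T_{kl}$, $T_{kj}$ and $T_{jl}$ whenever $j\neq k,l$, and such a $j$ exists because $n\geq 3$. The abelian group structure on $T_{kl}$ is inherited from $G$, and Steinberg relation~(1) identifies it with $(\OR,+)$ via $t_{kl}(\alpha)\mapsto\alpha$. I take this copy of $T_{kl}$ as the universe simultaneously for the module sort and for the ring sort of $T_{kl}^\OR$.

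For the ring multiplication, fix any intermediate index $j\neq k,l$. The commutator map $\phi\colon T_{kj}\to T_{kl}$ defined by $\phi(x)=[x,t_{jl}]$ satisfies $\phi(t_{kj}(\alpha))=t_{kl}(\alpha)$ by Steinberg relation~(2), so it is a definable bijection and its inverse is definable as well. Likewise $\psi\colon T_{jl}\to T_{kl}$, $y\mapsto[t_{kj},y]$, is a definable bijection sending $t_{jl}(\beta)$ to $t_{kl}(\beta)$. Define a multiplication $\odot$ on $T_{kl}$ by
\[
t_{kl}(\alpha)\odot t_{kl}(\beta)\;\define\;\bigl[\phi^{-1}(t_{kl}(\alpha)),\,\psi^{-1}(t_{kl}(\beta))\bigr]\;=\;[t_{kj}(\alpha),t_{jl}(\beta)]\;=\;t_{kl}(\alpha\beta),
\]
where the last equality is again Steinberg relation~(2). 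Since $\phi$, $\psi$ and the commutator in $G$ are all first-order definable with parameters from $\bar{t}$, the operation $\odot$ is definable in $(G,\bar{t})$, and by construction $(T_{kl},\cdot,\odot)\cong(\OR,+,\cdot)$ as rings. After identifying the ring sort of $T_{kl}^\OR$ with this copy of $\OR$, the scalar action $\delta$ on the module sort coincides with the very same $\odot$, hence is definable.

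Finally, the commutation identity $[t_{kj}(\alpha),t_{jl}]=[t_{kj},t_{jl}(\alpha)]=t_{kl}(\alpha)$ is exactly the specialization of Steinberg relation~(2) at $\beta=1$ in the first half and $\alpha=1$ in the second, and it is the same identity already used to see that $\phi$ and $\psi$ behave as claimed. I do not expect any serious obstacle here beyond bookkeeping: the heavy lifting is performed by Corollary~\ref{Tij-defn:cor} together with the Steinberg relations, and what remains is simply to verify that the transport maps $\phi,\psi$ used to carry multiplication onto the single set $T_{kl}$ are themselves definable in $(G,\bar{t})$, which is immediate once one writes the defining formulas down.
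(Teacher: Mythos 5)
Your argument is correct and takes essentially the same approach as the paper. The paper forms the definable subgroup $G_{k,j,l} = \langle T_{kj}, T_{jl}, T_{kl}\rangle \cong \text{UT}_3(\OR)$ and cites Mal'cev's classical interpretation of $\OR$ in $\text{UT}_3(\OR)$; you unpack that same interpretation explicitly, defining ring multiplication on $T_{kl}$ via the definable commutator bijections $\phi,\psi$, which is precisely the mechanism underlying Mal'cev's result.
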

\begin{proof} Fix  $1\leq k<j<l\leq n$. All subgroups $T_{kj}$, $T_{jl}$, $T_{kl}$ are definable, hence is the subgroup $G_{k,j,l}\define \langle T_{kj}, T_{jl}, T_{kl}\rangle$. Note that $G_{k,j,l}\cong \text{UT}_3(\OR)$. So, one can use Mal'cev interpretation of $\OR$ in $\text{UT}_3(\OR)$ to prove the claim for the choice of $k,j,l$. Using Steinberg relations one can easily interpret the actions on all the $T_{ij}$, $i\neq j$ so that the ring action respects commutation. 
	
\end{proof}

\begin{thm}\label{sln-biinter:thm} The ring of integers $\OR$ of a number field of degree $m$ and the group $(\SLO, \bar{t})$ are bi-interpretable. Hence, the ring $\Z$ of rational integers and $(\SLO,\bar{t}(\bar{\beta}))$ are bi-interpretable. \end{thm}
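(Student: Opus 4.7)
The plan is to apply the criterion of~(\cite{Nies2007}, Proposition~7.12) already used in the proof of Lemma~\ref{O-biint-Z:lem}: we exhibit interpretations $\Delta$ of $\OR$ in $(\SLO,\bar t)$ and $\Gamma$ of $(\SLO,\bar t)$ in $\OR$ and verify that both round-trips $\Gamma\circ\Delta$ and $\Delta\circ\Gamma$ give rise to definable isomorphisms. The interpretation $\Delta$ is already supplied by Lemma~\ref{O-interpret-SLO:lem}: take $T_{12}$ as the universe, with addition given by the group law and multiplication transported from the $\OR$-action recovered from the Steinberg commutators. The interpretation $\Gamma$ is the obvious one: elements of $\SLO$ are $n\times n$ matrices over $\OR$ of determinant $1$, the group law is polynomial in the entries, and the interpreted constants are the explicit transvection matrices $t_{ij}$.

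The easy round-trip is $\Gamma\circ\Delta$ applied to $\OR$: the definable copy is the interpreted $T_{12}\cong\OR$ whose elements are the transvections $t_{12}(\alpha)$, and the back-isomorphism $\alpha\mapsto t_{12}(\alpha)$ just reads off a single entry of an explicitly constructed matrix, hence is polynomial and plainly definable in $\OR$. The substantive step is the other round-trip $\Delta\circ\Gamma$ applied to $(\SLO,\bar t)$: we need a formula in $(\SLO,\bar t)$ that, given $x\in\SLO$, extracts the tuple $(t_{12}(x_{ij}))_{i,j}\in T_{12}^{n^2}$ encoding the entries of $x$. The critical tool is the Carter--Keller bounded generation theorem~\cite{Car-Kell} cited at the start of this section: there is a fixed integer $N$ and a fixed pattern $(i_1,j_1),\ldots,(i_N,j_N)$ so that every $x\in\SLO$ admits a factorization
\[
x = t_{i_1j_1}(\alpha_1)\,t_{i_2j_2}(\alpha_2)\cdots t_{i_Nj_N}(\alpha_N)
\]
with $\alpha_1,\ldots,\alpha_N\in\OR$. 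Expanding this product produces fixed integer polynomials $P_{ij}\in\Z[X_1,\ldots,X_N]$ with $x_{ij}=P_{ij}(\alpha_1,\ldots,\alpha_N)$. By Corollary~\ref{Tij-defn:cor} each $T_{i_sj_s}$ is definable in $(\SLO,\bar t)$, and by Lemma~\ref{O-interpret-SLO:lem} it is canonically identified with the interpreted $\OR$ via the Steinberg commutator $[t_{i_sk}(\alpha),t_{kj_s}]=t_{i_sj_s}(\alpha)$ for any $k\notin\{i_s,j_s\}$ (which exists since $n\ge 3$). So we can existentially quantify over the $\alpha_s$, evaluate the polynomials $P_{ij}$ in the interpreted ring, and transport the outputs into $T_{12}$; non-uniqueness of the factorization is harmless because the entries $x_{ij}$ are uniquely determined by $x$, so the resulting formula defines a function.

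The second statement follows by composing bi-interpretations: Lemma~\ref{O-biint-Z:lem} bi-interprets $(\OR,\bar\beta)$ with $\Z$, and the first assertion bi-interprets $(\SLO,\bar t)$ with $\OR$. Each scalar $\beta_k$ is recovered from a parameter $t_{ij}(\beta_k)\in\bar t(\bar\beta)$ by projection to $T_{12}$ via the canonical ring isomorphism $T_{ij}\cong T_{12}$, so the parameters align and the composition yields a bi-interpretation of $(\SLO,\bar t(\bar\beta))$ with $\Z$. The principal obstacle throughout is the $\Delta\circ\Gamma$ round-trip for $\SLO$: without Carter--Keller bounded generation there would be no a priori bound on the transvection length of an arbitrary element, and therefore no first-order recipe for extracting matrix entries from a bare group element.
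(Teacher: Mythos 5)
Your proposal is correct and follows essentially the same route as the paper: interpret $\OR$ in $(\SLO,\bar t)$ on a one‑parameter subgroup via Lemma~\ref{O-interpret-SLO:lem}, interpret $\SLO$ in $\OR$ by matrices, invoke Carter--Keller bounded generation to get a fixed‑length factorization pattern and integer polynomials $P_{ij}$ that make the $G\to\tilde G$ self‑isomorphism definable, and then compose with Lemma~\ref{O-biint-Z:lem} to pass from $\OR$ to $\Z$. The only cosmetic differences are your choice of $T_{12}$ in place of the paper's $T_{1n}$, and your streamlined treatment of the $\OR\to\tilde\OR$ round‑trip (noting the isomorphism $\alpha\mapsto t_{12}(\alpha)$ is directly polynomial), together with the explicit --- and worth making --- remark that non‑uniqueness of the bounded factorization is harmless because the matrix entries are determined by the group element.
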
	
 
\begin{proof}   Let us recall the standard absolute interpretation of $G=\SLO$ in $\OR$.  Consider the definable subset of $\OR^{n^2}$ consisting of $x=(x_{ij})\in \OR^{n^2}$ defined by $\text{det}(x)=_R1$, where $\text{det}(x)$ is a polynomial in the $x_{ij}$ with integer coefficients and $=_R$ denotes identity in the language of rings. Group product and inversion are also defined by (coordinate) polynomials in the $x_{ij}$. These polynomials do also have integer coefficients. 
 Let us denote this interpretation by $\Gamma$.
 
Let us denote the interpretation of $\OR$ in $G$ obtained in Lemma~\ref{O-interpret-SLO:lem} by $\Gamma$ and for simplicity assume $i=1$ and $k=n$. With this choice, $\OR$ is defined on the definable subset $T_{1n}$ of $(G, \bar{t})$. 

We are about to use bounded generation of $G=\SLO$ by the $T_{ij}$. For what is coming, it is important to have a fixed order on the way we express elements of $G$ as a product of elements from the $T_{ij}$. Indeed, by bounded generation there exists a number $w=w(n,\OR)$ depending  on both $n$ and $\OR$ and a function $f:\{1, \ldots w\} \to \{(i,j): 1\leq i\neq j\leq n\}$, such that, identifying $ij$ with $(i,j)$ 

     $$g\in G \Leftrightarrow \exists \bar{\gamma}\in \OR^w (g= \prod_{k=1}^w t_{f(k)}(\gamma_{f(k)}))$$
 
	Now, consider the copy $\tilde{G}$ of $G$ defined in $G$ via  $ G \xrightarrow{\Delta} \OR \xrightarrow{\Gamma} G $. Indeed, $\tilde{G}$ is a group defined on the (set) Cartesian product $(T_{1n})^{n^2}$, subject to finitely many group theoretic  relations. Given any $g \in G$, it is represented as $g= \prod_{k=1}^{w}g_{f(k)}$, $g_{f(k)}\in T_{f(k)}$. Next, we define a map
$$G \xrightarrow{\phi} \tilde{G}, \quad   g\mapsto \prod_{k=1}^{w}h_{f(k)}$$ where $h_{f(k)}$ is the unique element of $T_{1n}$ such that: 

$$h_{f(k)}\define \left\{\begin{array}{ll} 
g_{f(k)} &\text{if~} i=1 \wedge j=n\\ \\
\left[ t_{1j}, g_{f(k)}\right] &\text{if~} i\neq 1\wedge j=n\\ \\
\left[g_{f(k)},t_{jn}\right]&\text{if~} i=1\wedge j\neq n\\ \\
\left[\left[t_{1i}, g_{f(k)}\right],t_{jn}\right]& \text{if~} i\neq 1\wedge j\neq n \end{array}\right.$$
This is an isomorphism between $G$ and $\tilde{G}$ definable in $(G,\bar{t})$. 

Next, consider:
$$ \OR \xrightarrow{\Gamma} G  \xrightarrow{\Delta} \OR $$
and denote the copy of $\OR$ defined in itself via the composition of interpretations by $\tilde{\OR}$. The copy of $G$ defined in $\OR$ is of the form of the matrix products $g=\prod_{k=1}^{w}t_{f(k)}(\gamma_{f(k)})$, for $\gamma_{f(k)}$ in $\OR$. Since there is a fixed order of representation of elements $g$ of $G$ as products of transvections, for each $1\leq i,j \leq n$, there exists a polynomial $P_{ij}(\bar{y})\in \Z[\bar{y}]$, where $\bar{y}=(y_1, \ldots, y_w)$, such that for every matrix $g=(\beta_{ij})_{n\times n}\in \SLO\subset \OR^{n^2}$
$$ (\beta_{ij})_{n\times n}=g=\prod_{k=1}^{w}t_{f(k)}(\gamma_{f(k)}) \Leftrightarrow \forall 1\leq i,j \leq n ~~ \beta_{ij}=P_{ij}(\bar{\gamma}).$$   
We note that by our choice of interpretation of $\OR$ in $G$, $\tilde{\OR}$ is defined on the subset $T_{1n}$ of $\OR^{n^2}=(\beta_{ij})_{n\times n}$. So, the following sets up an isomorphism between $\OR$ and $\tilde{\OR}$, which is definable in $\OR$  
$$\beta= \beta_{1n} \Leftrightarrow \exists \bar{\gamma} \in \OR^w (\beta =P_{1n}(\bar{\gamma}))$$

Bi-interpretability with $\Z$ with the given parameters follows from above and Lemma~\ref{O-biint-Z:lem}.\end{proof}

\begin{corollary} The group $\SLO$ is QFA and prime for any ring of integers $\OR$.
\end{corollary}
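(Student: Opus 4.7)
My plan is to derive this corollary directly from Theorem~\ref{sln-biinter:thm} and Theorem~\ref{Nies2007}. The former says that $(\SLO, \bar{t}(\bar{\beta}))$ is bi-interpretable with $\Z$ (with parameters $\bar{t}(\bar\beta)$), and the latter says that any finitely generated structure in a finite signature which is bi-interpretable with $\Z$ (possibly with parameters) is both QFA and prime. Since $\SLO$ is viewed as a structure in the finite signature $\Lgroups$, the only thing that needs checking in order to apply Theorem~\ref{Nies2007} is finite generation of $\SLO$ as a group.

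Finite generation is essentially implicit in the setup: by the Carter--Keller result \cite{Car-Kell} recalled at the start of Section~\ref{slo:sec}, $\SLO$ is boundedly generated by the one-parameter subgroups $T_{ij}$, so in particular generated by them. Moreover, $\OR$ is a finitely generated $\Z$-module with generators $\beta_1,\ldots,\beta_m$, and via the Steinberg relation $t_{ij}(\alpha)t_{ij}(\beta)=t_{ij}(\alpha+\beta)$ one sees that $T_{ij}=\langle t_{ij}(\beta_1),\ldots,t_{ij}(\beta_m)\rangle$ for each $i\neq j$. Therefore the finite set $\{t_{ij}(\beta_k):1\le i\ne j\le n,\ 1\le k\le m\}$ generates $\SLO$.

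With finite generation in hand, Theorem~\ref{Nies2007} applied to the bi-interpretability statement of Theorem~\ref{sln-biinter:thm} yields at once that $\SLO$ is prime and QFA. I would note explicitly that the conclusion is about the pure group $\SLO$ rather than the expansion $(\SLO,\bar t(\bar\beta))$: this is already built into Theorem~\ref{Nies2007}, which tolerates parameters in the bi-interpretation while drawing its conclusions about the unexpanded structure.

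There is no real obstacle here; the corollary is essentially a formal consequence of two already-established results. The only point of care is the bookkeeping between the parameters in the bi-interpretation and the signature of the structure whose QFA/primeness is being asserted, and this is handled by the very formulation of Theorem~\ref{Nies2007}.
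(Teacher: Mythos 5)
Your proposal is correct and follows exactly the route the paper intends: combine Theorem~\ref{sln-biinter:thm} (bi-interpretability of $(\SLO,\bar t(\bar\beta))$ with $\Z$) with Theorem~\ref{Nies2007}, the only hypothesis to verify being finite generation of $\SLO$, which you justify via Carter--Keller bounded generation and the finite $\Z$-module generation of $\OR$. The paper leaves this corollary without an explicit proof precisely because it is this formal consequence, so your write-up matches it in substance.
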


\subsection{Models of the complete elementary theory of $\SLO$}
\begin{thm}\label{sln-elemchar:thm} Assume $H$ is a group and $H\equiv \SLO$ in $\Lgroups$. Then $H\cong \SLR$ for some ring $R\equiv \OR$ as rings.\end{thm}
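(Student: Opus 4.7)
The plan is to derive the theorem from the bi-interpretation of Theorem \ref{sln-biinter:thm}. The key point is that the data of the bi-interpretation of $(\SLO, \bar{t})$ with $\OR$ can be encoded as a single $\Lgroups$-sentence, hence transfers to any group elementarily equivalent to $\SLO$. Let $\Delta(\bar{x})$ denote the $\Lgroups$-formulas interpreting a ring in the group with parameters $\bar{x}$ (from Lemma \ref{O-interpret-SLO:lem}), let $\Gamma$ denote the absolute polynomial interpretation of $\textrm{SL}_n$ in a commutative unital ring (via $\det=1$ and polynomial matrix operations), and let $\phi(\bar{x})$ denote the $\Lgroups$-formula defining the isomorphism between the ambient group and $\Gamma(\Delta(\bar{x}))$ exhibited in the proof of Theorem \ref{sln-biinter:thm}. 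Bundle the bi-interpretation requirements into an open $\Lgroups$-formula $\Phi(\bar{x})$ asserting that (i) $\Delta(\bar{x})$ defines a commutative unital ring, (ii) $\phi(\bar{x})$ is a group isomorphism from the ambient group onto $\Gamma(\Delta(\bar{x}))$, and (iii) $\phi(\bar{x})$ sends $\bar{x}$ to the canonical transvection tuple of $\Gamma(\Delta(\bar{x}))$ (a parameter-free definable tuple in the interpreted-interpreted group). Set $\Psi := \exists \bar{x}\,\Phi(\bar{x})$.

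By Theorem \ref{sln-biinter:thm}, $\SLO \models \Psi$ with witness $\bar{t}$. Since $H \equiv \SLO$, also $H \models \Psi$; fix a witness $\bar{s}$ in $H$ and set $R := \Delta(H, \bar{s})$. Then $R$ is a commutative unital ring interpreted in $H$, and $\phi(\bar{s})$ defines an isomorphism $H \cong \Gamma(R) = \SLR$. It remains to show $R \equiv \OR$. For every $\sigma \in \Lrings$, the interpretation $\Delta$ gives a translation $\sigma^*(\bar{x}) \in \Lgroups$ with $\Delta(G, \bar{a}) \models \sigma$ iff $G \models \sigma^*(\bar{a})$ for any group $G$ and valid $\bar{a}$. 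Thus $R \models \sigma \Leftrightarrow H \models \sigma^*(\bar{s})$ and $\OR \models \sigma \Leftrightarrow \SLO \models \sigma^*(\bar{t})$, and it suffices to prove that the $\Lgroups$-sentence $\theta_\sigma := \forall \bar{x}\,(\Phi(\bar{x}) \to \sigma^*(\bar{x}))$ lies in $\Th(\SLO)$ exactly when $\OR \models \sigma$. If $\OR \not\models \sigma$, then $\bar{x} = \bar{t}$ refutes $\theta_\sigma$ in $\SLO$. Conversely, if $\OR \models \sigma$, any $\bar{x}_0 \in \SLO$ satisfying $\Phi$ produces a ring $R_0 := \Delta(\bar{x}_0)$ together with, by clause (iii), a transvection-preserving isomorphism $\SLO \cong \textrm{SL}_n(R_0)$, forcing $R_0 \cong \OR$ and hence $R_0 \models \sigma$. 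Then $\theta_\sigma \in \Th(\SLO) = \Th(H)$, and applying it to $\bar{s}$ gives $H \models \sigma^*(\bar{s})$, i.e., $R \models \sigma$.

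The main obstacle is the rigidity statement invoked above: that any transvection-preserving isomorphism $\SLO \cong \textrm{SL}_n(R_0)$ forces $R_0 \cong \OR$. This can be verified directly using the centralizer-of-transvection description of the one-parameter subgroups $T_{ij}$ (Lemma \ref{cent-sln:lem}, Corollary \ref{Tij-defn:cor}) together with the Steinberg relations, since a transvection-preserving isomorphism restricts to a group isomorphism $(\OR,+) \cong (R_0,+)$ on each $T_{ij}$, and the commutator formulas $[t_{ij}(\alpha),t_{jl}(\beta)]=t_{il}(\alpha\beta)$ transport multiplication as well; this is precisely the machinery already used in Section \ref{slo:sec} to construct the bi-interpretation.
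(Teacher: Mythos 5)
Your overall strategy---encoding the bi-interpretation data as a single $\Lgroups$-sentence $\exists\bar x\,\Phi(\bar x)$, transferring it to $H$ to get a witness $\bar s$, and reading off $H\cong \textrm{SL}_n(\Delta(H,\bar s))$---is the same idea the paper uses, and the translation of ring sentences $\sigma\mapsto\theta_\sigma$ is a clean way to get $R\equiv\OR$. The paper differs in two mechanical points: it invokes the formula $\Psi_{st}(\bar x)$ supplied by Nies (Theorem~7.14 of~\cite{Nies2007}) instead of building $\Phi$ by hand, and once $R$ is interpreted in $H$ it identifies $R\cong\OR\otimes_\Z\Z^*$ for $\Z^*\models\mathrm{Th}(\Z)$ and concludes $R\equiv\OR$ by a Keisler--Shelah ultrapower argument rather than the tuple-by-tuple translation you propose.

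There is a genuine gap in your rigidity step, i.e.\ the claim that any $\bar x_0\in\SLO$ with $\SLO\models\Phi(\bar x_0)$ forces $R_0:=\Delta(\bar x_0)\cong\OR$. Your justification is that the ``transvection-preserving'' isomorphism $\phi(\bar x_0)\colon\SLO\to\textrm{SL}_n(R_0)$ restricts to a group isomorphism $(\OR,+)\cong(R_0,+)$ on each $T_{ij}$, after which the Steinberg relations carry multiplication. But here you are conflating two families of subgroups. Clause~(iii) of $\Phi$ only tells you that $\phi(\bar x_0)$ carries the subgroup $T_{ij}(\bar x_0)$ defined from the parameters $\bar x_0$ (via the formula of Corollary~\ref{Tij-defn:cor} evaluated at $\bar x_0$) onto $T_{ij}(R_0)$. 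Since $R_0$ is by definition the ring put on the set $T_{1n}(\bar x_0)$ with addition the ambient group product, the assertion $\phi(\bar x_0)|_{T_{1n}(\bar x_0)}\colon (R_0,+)\xrightarrow{\sim}T_{1n}(R_0)$ is a tautology and says nothing about $\OR$. What you need is that $T_{ij}(\bar x_0)$ coincides with, or is at least group-isomorphic to, the genuine one-parameter subgroup $T_{ij}(\OR)$, and this does \emph{not} follow from $\Phi(\bar x_0)$ alone; you have not ruled out an exotic witnessing tuple $\bar x_0$ for which $\Delta(\bar x_0)$ produces some commutative ring $R_0$ with $\textrm{SL}_n(R_0)\cong\SLO$ as abstract groups yet $R_0\not\cong\OR$. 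To close the gap one must either invoke the abstract isomorphism theorem for $\textrm{SL}_n$ ($n\ge 3$) over commutative rings, or, as the paper does, rely on the guaranteed property of Nies' formula $\Psi_{st}$, namely that \emph{every} satisfying tuple yields a bi-interpretation with $\Z$, so the interpreted copy of $\Z$ is actually $\Z$ and hence the interpreted ring is actually $\OR$. As written, your rigidity lemma is an unproved assertion; once it is in place the rest of your argument (showing $\theta_\sigma\in\mathrm{Th}(\SLO)\Leftrightarrow\OR\models\sigma$ and transferring to $H$) is correct.
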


\begin{proof}
	Assume $\Gamma(\bar{t}(\bar{\beta}))$ is the interpretation of $\Z$ in $(\SLO, \bar{t}(\bar{\beta}))$ introduced in Theorem~\ref{sln-biinter:thm}.  By~(\cite{Nies2007}, Theorem~7.14) there is a formula $\Psi_{st}(\bar{x})$ of $\Lgroups$ that is satisfied by $\bar{t}(\bar{\beta})$ and if $\bar{s}(\bar{\gamma})$ is a sequence of elements of $\SLO$ so that $\SLO\models \Psi_{st}(\bar{s})$, then $\Gamma(\bar{s}(\bar{\gamma}))$, and $\Delta$ as in Theorem~\ref{sln-biinter:thm} bi-interpret $\Z$ and $(\SLO,\bar{s}(\bar{\gamma}))$. Now assume $\{\Psi^i:i\in I\}$ lists all axioms of $\Z$ and for each $i$, $\Psi^i_{\Gamma(\bar{t}(\bar{\alpha}))}$ is the $\Lgroups$ sentence such that $\Z\models \Psi^i \Leftrightarrow \SLO\models \Psi^i_{\Gamma(\bar{t}(\bar{\alpha}))}$. Then, by the above argument for each $i\in I$, $\SLO\models  \forall \bar{x}  (\Psi_{st}(\bar{x}) \to  \Psi^i_{\Gamma(\bar{x})})$.      	
	
	Therefore, there exists a tuple $\bar{u}$ of $H$ such that for each $i\in I$,  $H\models \Psi^i_{\Gamma(\bar{u})}$. This implies that $\Gamma(\bar{u})$ interprets a ring $R$ in $H$ where $R\cong \OR \otimes_\Z \Z^*$, where $\Z^*$ is a model of the theory of $\Z$. By Keissler-Shelah's Theorem there is a non-principal ultrafilter $\D$ on a set $\I$, such that the ultraproducts $\Z^\I/\D \cong (\Z^*)^\I/\D$. Now $\ds \OR^\I/\D \cong \OR \otimes_\Z( \Z^\I/\D)\cong \OR \otimes_\Z ((\Z^*)^\I/\D)\cong R^\I/\D$, proving that $\OR\equiv R$. 
	
	Since $G$ is interpretable in $\OR$, the isomorphism between $\SLO$, as the group of all matrices $n\times n$ matrices over $\OR$ of determinant 1, and itself as the group boundedly generated by one-parameter subgroups is definable in $\OR$. So the same fact holds in $\tilde{G}$. Since the isomorphism $\phi: G \to \tilde{G}$ is definable in $G$, the same first-order fact is expressible in $G$. Now, by the above paragraph $H$ is boundedly generated by some one-parameter subgroups of $H$ generated over the ring $R$. Since all formulas involved in the bi-interpretation of $G$ and $\OR$ are uniform, the fact that $H$ is the group of all $n\times n$ matrices of determinant 1 over the ring $R$ holds in $H$.       
\end{proof}

\section{The case of $\TO$}\label{TO:sec}

\begin{lemma}\label{lem:Tij-in-Tn} Assume $\beta_1, \ldots , \beta_m$ are free generators of $\OR$ as a $\Z$-module. Then for each pair $1\leq k<l \leq n$, the subgroup $T_{kl}$ is definable in $$(G,t_{kl}(\beta_1), \ldots ,t_{kl}(\beta_m), d_k)$$ where $G=\TO$. 
\end{lemma}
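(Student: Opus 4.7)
The plan is to produce an explicit $\Lgroups$-formula, with the listed parameters, that defines $T_{kl}$ by exhibiting it as a finite union of translates of the subgroup $2T_{kl}$. The strategy has three layers: (i) define $C_G(T_{kl})$ and its center using only the $t_{kl}(\beta_i)$; (ii) use $d_k$ to carve out $2T_{kl}$ from that center via a single commutator; and (iii) recover $T_{kl}$ from $2T_{kl}$ by translating by the $2^m$ products $\prod_i t_{kl}(\beta_i)^{\epsilon_i}$, $\epsilon_i\in\{0,1\}$.

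For step (i), since $T_{kl}\cong(\OR,+)$ is abelian and $\beta_1,\ldots,\beta_m$ generate $\OR$ as a $\Z$-module, any element centralizing each $t_{kl}(\beta_i)$ automatically centralizes every $t_{kl}(\sum n_i\beta_i)$; hence $C_G(T_{kl})=\bigcap_{i=1}^m C_G(t_{kl}(\beta_i))$, which is visibly definable from the parameters. An argument analogous to Lemma~\ref{cent-sln:lem}, but adapted to $\TO$, then gives
\[
Z(C_G(T_{kl}))\ =\ Z(G)\cdot T_{kl}\ =\ \{aI\cdot t_{kl}(c):a\in\OM,\ c\in\OR\}.
\]
The proof is an entry-by-entry computation: commutation of $z\in Z(C_G(T_{kl}))$ with all transvections $t_{ij}\in C_G(T_{kl})$ (i.e.\ all $t_{ij}$ with $i<j$ except $t_{i'k}$ for $i'<k$ and $t_{lj'}$ for $j'>l$) together with commutation with diagonals having $a_k=a_l$, forces the strict-upper part of $z$ to vanish off position $(k,l)$ and forces the diagonal entries $z_{ii}$ to be equal to each other — the chaining is supplied by $t_{il}$ for $i<k$, $t_{kj}$ for $j>l$, $t_{kl}$, and $t_{ij}$ with $i,j\notin\{k,l\}$.

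For step (ii), take $z=aI\cdot t_{kl}(c)\in Z(C_G(T_{kl}))$. The central factor $aI$ cancels in $[d_k,z]=d_k^{-1}z^{-1}d_kz$, and using $d_k=d_k^{-1}$ together with $d_kt_{kl}(c)d_k^{-1}=t_{kl}(-c)$ one computes $[d_k,z]=t_{kl}(2c)$. Hence the definable set
\[
\{[d_k,z]:z\in Z(C_G(T_{kl}))\}\ =\ \{t_{kl}(2c):c\in\OR\}\ =\ 2T_{kl}.
\]
For step (iii), because $\beta_1,\ldots,\beta_m$ is a free $\Z$-basis of $\OR$ its images form an $\mathbb{F}_2$-basis of $\OR/2\OR$, so every $c\in\OR$ can be written uniquely as $c=2c'+\sum_i\epsilon_i\beta_i$ with $c'\in\OR$ and $\epsilon_i\in\{0,1\}$. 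Translating to the one-parameter subgroup,
\[
T_{kl}\ =\ \bigcup_{\bar\epsilon\in\{0,1\}^m}\ 2T_{kl}\cdot t_{kl}(\beta_1)^{\epsilon_1}\cdots t_{kl}(\beta_m)^{\epsilon_m},
\]
and substituting the formula for $2T_{kl}$ from step (ii) yields the desired first-order definition
\[
x\in T_{kl}\ \Longleftrightarrow\ \bigvee_{\bar\epsilon\in\{0,1\}^m}\exists z\Bigl(z\in Z(C_G(T_{kl}))\ \wedge\ x=[d_k,z]\cdot t_{kl}(\beta_1)^{\epsilon_1}\cdots t_{kl}(\beta_m)^{\epsilon_m}\Bigr),
\]
a disjunction of $2^m$ existential formulas in the parameters $d_k,t_{kl}(\beta_1),\ldots,t_{kl}(\beta_m)$.

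The main obstacle is step~(i): verifying that the transvections actually available inside $C_G(T_{kl})$, together with the allowed diagonals, supply enough constraints to pin the shape of $z\in Z(C_G(T_{kl}))$ down to $aI\cdot t_{kl}(c)$. This is where the hypothesis $n\geq 3$ enters, as it guarantees that the relevant chaining transvections $t_{il}$, $t_{kj}$, $t_{kl}$ are present for every choice of $k<l$.
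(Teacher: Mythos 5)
Your proof is correct and follows essentially the same route as the paper: compute $Z(C_G(T_{kl}))=T_{kl}\times Z(G)$ by an argument parallel to Lemma~\ref{cent-sln:lem}, use $[d_k,\,\cdot\,]$ to produce the definable subgroup $2T_{kl}$, and recover $T_{kl}$ from $2T_{kl}$ together with the parameters $t_{kl}(\beta_i)$. The only cosmetic difference is that the paper closes with ``$T_{kl}=\langle T_{kl}^2,t_{kl}(\beta_1),\ldots,t_{kl}(\beta_m)\rangle$'' and leaves implicit that this is definable because $T_{kl}/2T_{kl}\cong\OR/2\OR$ is finite, whereas you make the resulting $\Lgroups$-formula explicit as a union of $2^m$ cosets of $2T_{kl}$; this is a small clarifying improvement, not a different method.
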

\begin{proof} Firstly note 
	\begin{align*}C_G(t_{kl})&=\langle d_p(\alpha), d_k(\alpha)d_l(\alpha), T_{il}, T_{kj}, T_{ij}:\\
	&~~~~~ \alpha \in \OM \wedge (p\neq k,l)\wedge i\neq l \wedge j\neq k \rangle\end{align*} An argument similar to the one in the proof of Lemma~\ref{cent-sln:lem} shows that $Z(C_G (t_{kl}))= T_{kl}\times Z(G)$. Next note that $T_{kl}^2=T_{kl}(2\OR)=[d_k, T_{kl}]= [d_k, Z(C_G(T_{kl}))]$. This shows that $T_{kl}^2$ is a definable subgroup. Hence is $T_{kl}=\langle T^2_{kl} ,t_{kl}(\beta_1), \ldots ,t_{kl}(\beta_m)\rangle$.
\end{proof}

\begin{thm}\label{tnomainbiinter:thm} Let $G=\TO$, assume $\OR$ has a finite group of units $\OM$ with generators $\alpha_1, \ldots \alpha_l$ and let $\beta_1, \ldots , \beta_m$ be free generators of $\OR$ as a $\Z$-module. Then the ring $\OR$ is bi-interpretable with $(G, \bar{t}(\bar{\beta}), \bar{d}(\bar{\alpha}))$.\end{thm}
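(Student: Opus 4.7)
The plan is to adapt the bi-interpretability argument of Theorem~\ref{sln-biinter:thm} to the solvable group $G=\TO$. Because $\OM$ is finite and $-1\in\OM$ lies in $\langle\bar{\alpha}\rangle$, every $d_k=d_k(-1)$ is a word in the parameters, so Lemma~\ref{lem:Tij-in-Tn} applies and makes each one-parameter transvection subgroup $T_{kl}$, $1\leq k<l\leq n$, definable in $(G,\bar{t}(\bar{\beta}),\bar{d}(\bar{\alpha}))$. For fixed $1<j<n$ the Heisenberg subgroup $\langle T_{1j},T_{jn},T_{1n}\rangle\cong\text{UT}_3(\OR)$ is then definable, and the Mal'cev interpretation used in Lemma~\ref{O-interpret-SLO:lem} yields an interpretation $\Delta$ of $\OR$ on $T_{1n}$; the Steinberg relations make the ring action of the interpreted $\OR$ on every $T_{ij}$ definable as well. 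In the reverse direction one takes the natural matrix interpretation $\Gamma$ of $G$ in $\OR$: an element of $G$ is a tuple $(g_{ij})_{i\leq j}\in\OR^{n(n+1)/2}$ with $g_{ii}\in\OM$, and the group operations are polynomial in the entries.

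To close the bi-interpretation one must write down definable isomorphisms $\phi:G\to\tilde G=\Gamma(\Delta(G))$ and $\psi:\OR\to\tilde\OR=\Delta(\Gamma(\OR))$. The key point for $\phi$ is the unique decomposition $g=d\cdot u$ with $d\in D_n(\OM)$ and $u\in\UTO$: the subgroup $\UTO$ is cut out as those $g\in G$ expressible as an ordered product $\prod_{i<j}t_{ij}(\gamma_{ij})$ with $\gamma_{ij}\in\tilde\OR$ (using the definable ring action on each $T_{ij}$), while $D_n(\OM)$ is a finite group each of whose elements is a fixed word in the parameters $\bar{d}(\bar{\alpha})$, so the statement ``$d^{-1}g\in\UTO$'' becomes a finite disjunction over $d\in D_n(\OM)$ and hence $g\mapsto d$ is definable. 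From $u$ the Mal'cev coordinates $\gamma_{ij}\in\tilde\OR$ are extracted definably, and then shipped into $T_{1n}$ by the commutator formulas
$$h_{ij}=\begin{cases}t_{ij}(\gamma_{ij}) & i=1,\ j=n,\\ [t_{ij}(\gamma_{ij}),t_{jn}] & i=1,\ j<n,\\ [t_{1i},t_{ij}(\gamma_{ij})] & 1<i,\ j=n,\\ [[t_{1i},t_{ij}(\gamma_{ij})],t_{jn}] & 1<i<j<n,\end{cases}$$
each of which evaluates by the Steinberg relations to $t_{1n}(\pm\gamma_{ij})\in T_{1n}$. One sets $\phi(g)=(d,(h_{ij})_{i<j})$ and the group homomorphism property follows from the uniqueness of the decomposition. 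The isomorphism $\psi$ is essentially the identity $\OR\cong T_{1n}$, exactly as in the proof of Theorem~\ref{sln-biinter:thm}: under $\Gamma$ the element $\alpha\in\OR$ corresponds in particular to the matrix $t_{1n}(\alpha)$, whose $(1,n)$-entry is $\alpha$ and whose other off-diagonal entries are zero, so that $\Delta$ applied to this matrix reads off $\alpha$ definably.

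The principal obstacle, absent from the $\SLO$ argument, is the presence of the diagonal torus $D_n(\OM)$ inside $\TO$: there is no bounded-generation-by-transvections statement to invoke, so one has to argue the decomposition $g=du$ and the first-order definability of $g\mapsto d$ separately. The hypothesis that $\OM$ is finite is precisely what makes this step work, since it reduces the diagonal factor to a choice from a finite explicit list named by the parameters $\bar{d}(\bar{\alpha})$. When $\OM$ is infinite this approach breaks down, which is why Section~\ref{TO:sec} subsequently replaces bi-interpretability by the ``abelian deformation'' analysis of Theorem~\ref{TO-elemchar:thm}.
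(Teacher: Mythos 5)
Your proposal is correct and follows the same route the paper takes: the paper's own proof simply records the semi-direct decomposition $\TO\cong D_n(\OR)\ltimes\UTO$, the unique ordered-product factorization of elements of $\UTO$, and the definability of each $d_k(\OM)$ (finite) and each $T_{ij}$ (Lemma~\ref{lem:Tij-in-Tn}), then defers to ``a similar argument as in the proof of Theorem~\ref{sln-biinter:thm}.'' You have supplied precisely those deferred details -- the Mal'cev interpretation of $\OR$ on $T_{1n}$ via a definable Heisenberg subgroup, the matrix interpretation of $G$ in $\OR$, the commutator shipping formulas, and the observation that finiteness of $D_n(\OM)$ turns the selection of the diagonal part $d$ in $g=du$ into a finite disjunction over words in $\bar d(\bar\alpha)$ -- so the two arguments are essentially the same.
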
 

\begin{proof}
Recall that the group $G=\TO$ is isomorphic to a semi-direct product $D_n(\OR) \ltimes \UTO$. There is an ordering on the $T_{ij}$ where each element $g$ of $\UTO$ has a unique representation as a product of elements of the $T_{ij}$ in that ordering, i.e. there exists a surjective function 
$$f:\{1,2,\ldots, n(n-1)/2\}\to \{(i,j): 1\leq i<j\leq n \}$$ and unique elements $t_{f(k)}(\gamma_{f(k)}(g))\in T_{f(k)}$ such that $$g= \prod_{k=1}^{n(n-1)/2}t_{f(k)}(\gamma_{f(k)}(g))$$ By assumption all the subgroups $d_k(\OM)$ are definable with parameters since these are all finite and the subgroups $T_{ij}$ are definable with parameters by Lemma~\ref{lem:Tij-in-Tn}. Now the result follows with a similar argument as in the proof of Theorem~\ref{sln-biinter:thm}. 
\end{proof}
\begin{corollary}\label{TO-qfa:cor} Assume $H$ is any finite extension of $\UTO$ in $\TO$ which includes all the $d_i$, $i=1,\ldots,n$. Then $H$ is bi-interpretable with $\OR$ and consequently with $\Z$. \end{corollary}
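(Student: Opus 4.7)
The plan is to follow closely the strategy of Theorem~\ref{tnomainbiinter:thm}, with the finite subgroup $D':=H\cap D_n(\OR)$ playing the role that was played there by the (assumed finite) diagonal subgroup $D_n(\OR)$. In Theorem~\ref{tnomainbiinter:thm} that diagonal subgroup was finite because $\OM$ was assumed finite; in the present corollary the finiteness of $D'$ comes instead directly from the assumption that $[H:\UTO]$ is finite.

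First I would record the structure of $H$. Since $\UTO\triangleleft\TO$, it is normal in $H$, and $H/\UTO$ embeds into $\TO/\UTO\cong D_n(\OR)$ as a finite subgroup. Using the semidirect decomposition $\TO\cong D_n(\OR)\ltimes \UTO$, the image of $H/\UTO$ is exactly $D'=H\cap D_n(\OR)$, so $H\cong D'\ltimes \UTO$. By hypothesis $D'$ contains every $d_i$, and being finite it is generated by some finite tuple $\bar{\alpha}$ of diagonal matrices, which we adjoin together with $\bar{t}(\bar{\beta})$ as parameters.

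Next I would verify definability of each $T_{kl}$ in $(H,\bar{t}(\bar{\beta}),\bar{\alpha})$, essentially transcribing Lemma~\ref{lem:Tij-in-Tn} to $H$. The centralizer $C_H(t_{kl})=C_{\TO}(t_{kl})\cap H$ still contains every $T_{ij}$ whose index pair does not interfere with $(k,l)$, together with all $d_p$ for $p\ne k,l$; this is enough to force $Z(C_H(t_{kl}))=T_{kl}\times Z(H)$ by the entry-by-entry argument of Lemma~\ref{cent-sln:lem}. From $[d_k,T_{kl}]=T_{kl}(2\OR)$ one obtains $T_{kl}^2$ as a definable subgroup, and adjoining the parameters $t_{kl}(\beta_1),\ldots,t_{kl}(\beta_m)$ recovers all of $T_{kl}$.

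With the $T_{ij}$ definable, the rest of the argument parallels Theorem~\ref{tnomainbiinter:thm}: one interprets $\OR$ inside any triangular subgroup $\langle T_{kj},T_{jl},T_{kl}\rangle\cong \text{UT}_3(\OR)$ via Mal'cev's interpretation (as in Lemma~\ref{O-interpret-SLO:lem}), while in the opposite direction $H$ is interpreted in $\OR$ as the set of matrices of $\TO\subset \OR^{n^2}$ whose diagonal part lies in the finite definable set $D'$. Each element of $H$ then has a unique normal form $d\cdot \prod_{k=1}^{n(n-1)/2} t_{f(k)}(\gamma_{f(k)})$ with $d\in D'$, so the two compositions of interpretations are definable isomorphisms exactly as in that theorem. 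The main obstacle I anticipate is the centralizer adaptation: because $H$ is a proper subgroup of $\TO$, taking centralizers inside $H$ could in principle \emph{enlarge} $Z(C_H(t_{kl}))$, and one must confirm that no extraneous central elements appear. The saving observation is that the entry-vanishing conditions in the proof of Lemma~\ref{cent-sln:lem} are forced purely by the $T_{ij}\subseteq \UTO\subseteq H$, and the equalization of diagonal entries by the $d_p\in H$, so no diagonal element outside $D'$ is actually needed. Bi-interpretability with $\Z$ then follows by composing with Lemma~\ref{O-biint-Z:lem}.
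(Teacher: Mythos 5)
Your proposal is correct and takes essentially the route the paper intends: the Corollary is stated right after Theorem~\ref{tnomainbiinter:thm} without a separate proof precisely because the same argument transplants once one records $H\cong D'\ltimes\UTO$ with $D'=H\cap D_n(\OR)$ finite, which is exactly what you do. One minor imprecision in your centralizer discussion: in $\TO$ (unlike the $\SLO$/$\GLO$ setting of Lemma~\ref{cent-sln:lem}) the available $T_{ij}\subseteq\UTO$ alone do not eliminate all extraneous off-diagonal entries of an element of $Z(C_H(t_{kl}))$ --- entries in column $l$ and row $k$ other than the $(k,l)$ entry survive that sieve --- and the diagonal generators $d_p$ ($p\neq k,l$) and $d_kd_l$ of $C_H(t_{kl})$ are genuinely needed for entry-vanishing too, not merely for equalizing the diagonal; this does no harm, since all $d_i$ lie in $D'\subseteq H$ by hypothesis, so your conclusion stands.
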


	\begin{rem} We note that if $\OM$ is infinite then by Corollary 3. of ~\cite{Oger2006}, $\TO$ is not QFA, hence it is not bi-interpretable with $\Z$. However, a group $H$ as in our Corollary~\ref{TO-qfa:cor} is QFA and prime. This also follows from the main theorem of~\cite{Lasserre}.   \end{rem}

\subsection{Models of the complete theory of $\TO$ where $|\OM|<\infty$}

\begin{lemma}\label{utofi:lem} Assume $G =\UTO\ltimes A $ where $A$ is a finite subgroup of $D_n$ containing all the $d_i$. If $H$ is any group such that $H\equiv G$, then $G \cong \UTR \ltimes A$ for some ring $R\equiv \OR$, where the action of $A$ on $\UTR$ is the natural extension of the action of $A$ on $\UTO$.\end{lemma}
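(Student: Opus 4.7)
The plan is to mimic the argument used in Theorem~\ref{sln-elemchar:thm}, replacing the bi-interpretation of $\SLO$ with $\OR$ by the one supplied by Corollary~\ref{TO-qfa:cor}. Since $A$ is a finite subgroup of $D_n(\OR)$ containing all the $d_i$, the semidirect product $G = \UTO \ltimes A$ is a finite extension of $\UTO$ in $\TO$ containing all the $d_i$, so by Corollary~\ref{TO-qfa:cor} the group $G$ is bi-interpretable with $\OR$ (and therefore with $\Z$) via an interpretation $\Gamma$ and a parameter tuple which I will take to be $\bar{p} = (\bar{t}(\bar{\beta}),\bar{d}(\bar{\alpha}))$, augmented, if necessary, so that every element of the finite group $A$ is named.

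The next step is to invoke Theorem~\ref{Nies2007}: there is an $\Lgroups$-formula $\Psi_{st}(\bar{x})$ satisfied by $\bar{p}$ in $G$ such that, for any tuple $\bar{v}$ with $G \models \Psi_{st}(\bar{v})$, the pair $(\Gamma(\bar{v}),\Delta)$ bi-interprets $(G,\bar{v})$ with $\OR$. Transferring to $H \equiv G$, I would pick a tuple $\bar{u}$ in $H$ satisfying $\Psi_{st}$, and use $\Gamma(\bar{u})$ to interpret a ring $R$ in $H$. As in Theorem~\ref{sln-elemchar:thm}, for every axiom $\psi$ of $\Th(\OR)$ the sentence $\forall\bar{x}(\Psi_{st}(\bar{x}) \to \psi^{\Gamma})$ holds in $G$ and hence in $H$; a Keisler--Shelah ultraproduct argument, exactly as in the proof of Theorem~\ref{sln-elemchar:thm}, then yields $R \equiv \OR$.

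Third, I would exploit the definable isomorphism $\phi : G \to \tilde{G}$ coming from the bi-interpretation, where $\tilde{G}$ is the copy of $G$ defined in $G$ via $\Delta \circ \Gamma(\bar{p})$. The existence of such a $\phi$ is a first-order statement about $\bar{p}$, so it transfers to $H$ with $\bar{u}$ in place of $\bar{p}$, giving a definable isomorphism from $H$ onto the concrete matrix-group construction $\UTR \ltimes A_R$ built inside $R$ via $\Delta$, where $A_R$ is the interpretation of $A$ and $\UTR$ is the interpretation of $\UTO$. Because $A$ is finite, its complete atomic diagram is expressible by a single sentence, so $A_R \cong A$; and the conjugation action of $A_R$ on $\UTR$ is governed by the same polynomial identities $d_i(\alpha) t_{jk}(\beta) d_i(\alpha)^{-1} = t_{jk}(\alpha^{\pm 1}\beta)$ (and their analogues) that define the action of $A$ on $\UTO$, which transfer verbatim once one observes that the finite set $\OM$ embeds canonically into $R^{\times}$.

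The main obstacle is bookkeeping: carefully tracking the named parameters through the compositions $\Delta\circ\Gamma$ and $\Gamma\circ\Delta$ so that the image of $A$ inside the copy of $H$ is identified with the very same abstract finite group $A$, and confirming that the transferred conjugation action really is the \emph{natural} extension to $\UTR$ rather than some other lift. Once these identifications are made explicit, the isomorphism $H \cong \UTR \ltimes A$ emerges as a direct first-order consequence of the bi-interpretation, completing the argument.
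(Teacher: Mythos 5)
Your proof is correct and follows essentially the same route as the paper's: establish bi-interpretability of $G$ with $\OR$, obtain the Nies-type formula $\Psi_{st}$ characterizing good parameter tuples, transfer an interpreted ring $R$ to $H$, and use Keisler--Shelah to conclude $R\equiv\OR$, exactly as in Theorem~\ref{sln-elemchar:thm}. The one (minor) divergence is how bi-interpretability of $G$ with $\OR$ is obtained. You apply Corollary~\ref{TO-qfa:cor} directly to $G=\UTO\ltimes A$, which is legitimate because $G$ is itself a finite extension of $\UTO$ inside $\TO$ containing all the $d_i$. The paper instead first passes to the absolutely and uniformly definable subgroup $\sqrt{G'}=\{x\in G: x^2\in G'\}$, observes that it is a finite extension of $\UTO$ containing all the $d_i$ (using that $G'$ contains $\textrm{UT}_n(2\OR)$ and that $G/G'$ is abelian), applies Corollary~\ref{TO-qfa:cor} to $\sqrt{G'}$, and then bootstraps to $G$. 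Your shortcut loses nothing for the present lemma, but the paper's $\sqrt{G'}$ formulation is the one that is reused verbatim in the proof of Theorem~\ref{TnMain:thm}, where the ambient group is all of $\TO$ with an infinite torus and only $\sqrt{G'}$ (not $G$) satisfies the hypothesis of Corollary~\ref{TO-qfa:cor}.
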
 
\begin{proof} The commutator subgroup $G'$ is absolutely and uniformly definable in $G$ since it is of finite width. In general $G'$ is a finite index subgroup of $\UTO$ (See proof Lemma~\ref{G':lemma}) and it includes $\text{UT}_n(2\OR)$. Now $\sqrt{G'}\define\{x\in G: x^2\in G'\}$ is uniformly definable in $G$ and this subgroup includes both $\UTO$ and the $d_i$. Hence, by Corollary~\ref{TO-qfa:cor} $\sqrt{G'}$ is bi-interpretable with $\Z$. Hence, $G$ is bi-interpretable with $\Z$. The rest of the proof is similar to that of Theorem~\ref{sln-elemchar:thm}. \end{proof}

As a corollary we get the following statement:
 
\begin{thm}\label{TO-elemchar:thm} If $H$ is any group such that $H\equiv \TO$, where $|\OM|<\infty$ then $H\cong \TR$ for some $R\equiv \OR$. \end{thm}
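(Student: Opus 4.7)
The plan is to apply Lemma~\ref{utofi:lem} in the ``maximal'' case where the finite diagonal group $A$ is taken to be the entire diagonal $D_n(\OR)$. The hypothesis $|\OM|<\infty$ is doing exactly the work needed here: it makes $D_n(\OR)\cong(\OM)^n$ into a \emph{finite} subgroup of $\TO$, and this subgroup automatically contains every dilation $d_i=d_i(-1)$ since $-1\in\OM$. We therefore have the semidirect decomposition $\TO=\UTO\rtimes D_n(\OR)$ in exactly the form required by Lemma~\ref{utofi:lem}, with $A:=D_n(\OR)$.

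Applying Lemma~\ref{utofi:lem} to any $H\equiv\TO$ then produces a ring $R\equiv\OR$ together with an isomorphism
$$H\cong \UTR\rtimes D_n(\OR),$$
where $D_n(\OR)$ acts on $\UTR$ as the natural extension of its conjugation action on $\UTO$. The remaining task is to recognize the right-hand side as $\TR$. For this one needs to match $D_n(\OR)$ with the full diagonal $D_n(R)$ of $\TR$. Since $\OM$ is finite, its entire group structure is first-order expressible in $\Lrings$: the unit predicate is $\exists y\,(xy=1)$, the cardinality $|\OM|$ is captured by a single sentence, and the multiplication table is a conjunction of finitely many equations among those named units. By $R\equiv\OR$ the ring $R$ satisfies the same sentences, so $R^\times\cong\OM$ as abstract groups, and consequently $D_n(R)\cong D_n(\OR)$.

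Under this identification, $D_n(\OR)$ sits inside $\TR$ as the full diagonal subgroup, and the natural extended action supplied by the lemma becomes ordinary conjugation inside $\TR$ — on each one-parameter subgroup $T_{ij}$ both actions coincide with scalar multiplication of the entry by the appropriate ratio of unit coordinates. Hence $\UTR\rtimes D_n(\OR)=\UTR\rtimes D_n(R)=\TR$, giving $H\cong\TR$ as required.

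The main obstacle is essentially bookkeeping rather than new content: all the heavy lifting (bi-interpretability of the ``square-root'' finite extension with $\Z$ and the ultraproduct argument producing $R$) is already packaged inside Lemma~\ref{utofi:lem}. The only point deserving care is the last identification of actions, which must be made canonically enough that the semidirect product on the abstract side is literally the matrix group $\TR$; but this reduces to the uniform description, first-order over any ring, of conjugation of a diagonal matrix on a transvection.
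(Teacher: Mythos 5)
Your argument is correct and is precisely what the paper intends: Theorem~\ref{TO-elemchar:thm} is stated as an immediate corollary of Lemma~\ref{utofi:lem}, applied with $A = D_n(\OR)$, which is finite exactly because $|\OM|<\infty$ and contains every $d_i$ since $-1\in\OM$. The only point worth sharpening is that you need not merely an abstract isomorphism $R^\times \cong \OM$ but that the canonical embedding $\OM \hookrightarrow R^\times$ (coming from $\OR \hookrightarrow R$) is surjective---which follows since both are finite of the same cardinality by elementary equivalence---so that the lemma's ``natural extension'' of the $D_n(\OR)$-action on $\UTR$ coincides literally with conjugation inside $\TR$ and the semidirect product is $\TR$ on the nose.
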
    

\subsection{Models of the complete elementary theory of $\TO$ where $\OM$ is infinite} 
We recall a few well-known concepts and facts from the extension theory and its relationship with the second cohomology group in Section~\ref{cohom-rev:sec}. Readers familiar with this material may proceed to Section~\ref{CoT:sec}. 
\subsubsection{ Extensions and 2-cocycles} \label{cohom-rev:sec}
Assume that $A$ is an abelian group and  $B$ is a group, both written multiplicatively. A function $$f:B\times B\rightarrow A$$ satisfying
\begin{itemize}
	\item $f(xy,z)f(x,y)=f(x,yz)f(y,z)$, \quad $\forall x,y,z\in B,$
	\item $f(\one)=f(x,\one)=\one$, $\forall x\in B$,
\end{itemize}
is called a \textit{2-cocycle}. If $B$ is abelian a 2-cocycle $f:B\times B\rightarrow A$ is \textit{symmetric} if it also satisfies the identity:
$$f(x,y)=f(y,x)\quad \forall x,y \in B.$$
By an extension of $A$ by $B$ we mean a short exact sequence of groups
$$\one\rightarrow A \xrightarrow{\mu} E \xrightarrow{\nu} B \rightarrow \one,$$
where $\mu$ is the inclusion map. The extension is called \emph{abelian} if $E$ is abelian and it is called \emph{central} if $A\leq Z(E)$. A \textit{2-coboundary}\index{ $2$-coboundary} $g:B\times B \rightarrow A$ is a 2-cocycle satisfying :
$$\psi(xy)=g(x,y)\psi(x)\psi(y), \quad \forall x,y\in B,$$
for some function $\psi:B\rightarrow A$. One can make the set $Z^2(B,A)$ of all 2-cocycles  and the set $B^2(B,A)$ of all 2-coboundaries into
abelian groups in an obvious way. Clearly $B^2(B,A)$
is a subgroup of $Z^2(B,A)$. Let us set $$H^2(B,A)= Z^2(B,A)/B^2(B,A).$$
Assume $f$ is a 2-cocycle. Define a group $E(f)$ by $E(f)=B\times A$ as sets with the multiplication
$$(b_1, a_1)(b_2, a_2)=(b_1b_2, a_1a_2f(b_1,b_2)) \quad \forall a_1,a_2\in A,\forall b_1,b_2 \in B.$$
The above operation is a group operation and the resulting extension is central. It is a well known fact that there is a bijection between the equivalence classes of central
extensions of $A$ by $B$ and elements of the group $H^2(B,A)$ given by assigning $f \cdot B^2(B,A)$ the equivalence class of $E(f)$. We write $f_1\equiv f_2$ for $f_1,f_2\in Z^2(B,A)$ if they are \emph{cohomologous}, i.e., if $f_1\cdot B^2(B,A)=f_2\cdot B^2(B,A)$.

If $B$ is abelian $f\in Z^2(B,A)$  is symmetric if and only if it arises from an abelian extension of $A$ by $B$. As it
can be easily imagined there is a one to one correspondence between the equivalent classes of abelian extensions
and the quotient group $$Ext(B,A)=S^2(B,A)/(S^2(B,A)\cap B^2(B,A))\index{ $Ext(B,A)$},$$ where $S^2(B,A)$\index{ $S^2(B,A)$} denotes the group of symmetric
2-cocycles. For further details we refer the reader to ~(\cite{robin}, Chapter 11).
\subsubsection{CoT 2-cocycles}\label{CoT:sec}
Assume $A=T\times F$ is an abelian group where $T$ is torsion and $F$ is torsion-free and let $B$ be an abelian group. We know that $Ext(A,B)\cong Ext(T,B)\oplus Ext(F,B)$, so any symmetric 2-cocycle $f\in S^2(A,B)$ can be written as $f\equiv f_1\cdot f_2$, where $f_1\in S^2(A,B)$, $f_2\in S^2(T,B)$. The symmetric 2-cocycle $f$ is said to be a \emph{coboundary on torsion} or $\emph{CoT}$ if $f_2$ is a 2-coboundary.
\subsubsection{Non-split tori and abelian deformations of $\TR$} \label{abdef:sec}

Consider $\TR$ and the torus $D_n(R)$. The subgroup $D_n(R)$ is a direct product $(\RM)^n$ of $n$ copies of the multiplicative group of units $\RM$ of $R$. The center $Z(G)$ of $G$ consists of diagonal scalar matrices $Z(G)=\{\alpha \cdot \one: \alpha\in \RM\}\cong \RM$, where $\one$ is the identity matrix. It is standard knowledge that $Z(G)$ is a direct factor of $D_n(R)$, i.e. there is a subgroup $B\leq D_n$ such that $D_n=B \times Z(G)$. Now we define a new group just by deforming the multiplication on $D_n$. Let $E_n=E_n(R)$ be an arbitrary abelian extension of $Z(G)\cong \RM$ by $D_n/Z(G)\cong (\RM)^{n-1}$. As it is customary in extension theory we can assume $E_n=D_n=B \times Z(G)$ as sets, while the product on $E_n$ is defined as follows:
$$(x_1,y_1)\cdot (x_2,y_2)=(x_1x_2,y_1y_2f(x_1,x_2)),$$
for a symmetric 2-cocycle $f\in S^2(B,Z(G)).$

\begin{rem}\label{cocycles:rem} Indeed any abelian extension $E_n$ of $\RM$ by $(\RM)^{n-1}$ is uniquely determined by some symmetric 2-cocycles $f_i\in S^2(\RM,\RM)$, $i=1, \ldots ,n-1$ up to equivalence of extensions due to the fact that $ Ext((\RM)^{n-1},\RM)\cong \prod_{i=1}^{n-1} Ext(\RM,\RM)$.  So if the $f_i$ are the defining 2-cocycles for $E_n$ above we also denote the group $E_n$ obtained above by $D_n(R, f_1, \ldots , f_{n-1})$ or $D_n(R,\bar{f})$.\end{rem}

We are now ready to define abelian deformations $\TRf$ of $\TR$ via generators and relations. 

For each $i=1, \ldots, n-1$ pick $f_i\in S^2(\RM,\RM)$. Then, an \emph{abelian deformation} $\textrm{T}_n(R,f_1, \ldots, f_{n-1})$ of $\TR$ is defined as follows.

$\TRf$ is the group generated by 
$$\{d_i(\alpha),t_{kl}(\beta): 1\leq i \leq n, 1\leq k<l\leq n, \alpha\in \RM , \beta\in R\},$$

with the defining relations:
\begin{enumerate}
	\item $t_{ij}(\alpha)t_{ij}(\beta)=t_{ij}(\alpha+\beta).$
	\item \begin{equation*}[t_{ij}(\alpha),t_{kl}(\beta)]=\left\{\begin{array}{ll}
	t_{il}(\alpha\beta)  & \text{if } j=k \\
	t_{kj}(-\alpha\beta)  & \text{if } i=l \\
	\one &\text{if } i\neq l, j\neq k
	\end{array}\right.
	\end{equation*}
	\item If $1\leq i\leq n-1$, then $d_{i}(\alpha)d_{i}(\beta)=d_{i}(\alpha\beta)diag(f_i(\alpha,\beta))$, where\\ $diag(f_i(\alpha,\beta))\define d_1(f_i(\alpha,\beta))\cdots d_n(f_i(\alpha,\beta)), $
	\item $[d_i(\alpha),d_j(\beta)]=\one$
	\item \begin{equation*}
	d_k(\alpha^{-1})t_{ij}(\beta)d_k(\alpha)=\left\{\begin{array}{ll}
	t_{ij}(\beta)  & \text{if } k\neq i, k\neq j \\
	t_{ij}(\alpha^{-1}\beta) &\text{if } k=i\\
	t_{ij}(\alpha\beta)&\text{if } k=j
	\end{array}\right.
	\end{equation*}
\end{enumerate}


\begin{lemma}
	\label{well-def:lem}
	The set $\TRf$ is a group for any choice of $f_i\in S^2(\RM,\RM)$.
\end{lemma}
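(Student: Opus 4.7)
The plan is to realize $\TRf$ concretely as a semidirect product $H = \UTR \rtimes D_n(R,\bar f)$, where $D_n(R,\bar f)$ is the abelian group introduced in Remark~\ref{cocycles:rem}: its underlying set is $D_n(R)$, with multiplication deformed by the symmetric $2$-cocycles $f_1, \ldots, f_{n-1}$, and $D_n(R,\bar f)$ acts on $\UTR$ via the usual diagonal conjugation. The key observation is that the correction terms $\mathrm{diag}(f_i(\alpha,\beta))$ are scalar matrices, hence lie in $Z(\TR)$ and commute with everything; this ensures that the conjugation action descends consistently from $D_n(R)$ to the deformed torus, and makes $H$ a well-defined group whose associativity reduces to that of $D_n(R,\bar f)$, which in turn holds by the $2$-cocycle identity on the $f_i$.

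Next, I would single out in $H$ the natural candidate images of the generators $t_{ij}(\alpha)$ and $d_i(\alpha)$ and verify that relations (1)--(5) hold. Relations (1) and (2) are simply the Steinberg relations inside $\UTR$; (4) is commutativity of $D_n(R,\bar f)$, which follows from symmetry of each $f_i$; and (5) is the usual conjugation formula of a transvection by a dilation in $\TR$, untouched by the deformation since the cocycle only modifies multiplication inside the torus. The crucial case is (3): this is exactly the defining formula for multiplication in $D_n(R,\bar f)$ once $\mathrm{diag}(f_i(\alpha,\beta))$ is identified with the cocycle value in $Z(\TR) \cong \RM$, and so holds by construction of the abelian extension.

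By the universal property of the presentation, these verifications furnish a surjective homomorphism $\phi \colon \TRf \twoheadrightarrow H$. To conclude that $\phi$ is an isomorphism, and hence that the presentation does not collapse and really defines a group with the expected structure, I would derive a normal form for elements of $\TRf$: using (5) one pushes every dilation to the left of any unipotent factor, and using (3), (4) one gathers the diagonal part into a single product $d_1(\alpha_1) \cdots d_n(\alpha_n)$. Every element of $\TRf$ therefore admits a presentation $d_1(\alpha_1) \cdots d_n(\alpha_n) \cdot u$ with $u \in \UTR$, and distinct normal forms are mapped to distinct elements of $H$, so $\phi$ is injective. The main technical point in this program is verifying associativity in the torus component of $H$: one must check carefully that the scalar correction terms $\mathrm{diag}(f_i(\alpha,\beta))$ commute through both the torus and the unipotent part without disturbing the combinatorics of relations (1), (2), (5), and this is precisely where centrality of scalar matrices plays its essential role.
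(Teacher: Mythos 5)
Your proof is correct and follows essentially the same strategy as the paper's: both realize $\TRf$ as a semidirect product of $\UTR$ by the deformed torus $D_n(R,\bar f)$, with the key point being that the cocycle correction terms $\mathrm{diag}(f_i(\alpha,\beta))$ are central scalars, so the conjugation action on $\UTR$ factors through the common quotient of the deformed and undeformed tori by the center. Your invocation of the universal property together with a normal form to check that the presentation does not collapse is slightly more explicit than the paper's terse internal-decomposition argument, but the underlying idea is identical.
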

\begin{proof} 
The $t_{ij}(\beta)$ generate a group $G_u\cong{\UTR}$ by relations (1.) and (2.). The $d_i(\alpha)$ generate an abelian group $E_n$ by (3.) and (4.). Note that both of the above are closed under group operations and $G_u\cap E_n=\one$. By (5.) $G_u$ is stable under the action of $E_n$ by conjugation which is described by (5.) itself, i.e. (5.) describes a homomorphism $\psi_{n,R}:E_n \to Aut(G_u)$ so that $\TRf = E_n\ltimes_{\psi_{n,R}} G_u$, as an internal product, and $ker(\psi_{n,R})=Z(G)=\{diag(\alpha): \alpha\in \RM\}$.\end{proof}
\begin{lemma}\label{G':lemma} Assume $R$ is a commutative associative ring with unit. Then the derived subgroup $G'$ of $G=\TRf$ is the subgroup of $G$ generated by
	$$X=\{t_{i,i+1}((1-\alpha)\beta), t_{kl}(\beta):1\leq i\leq n-1, 1< k+1<l\leq n, \alpha\in\RM, \beta\in R\}.$$\end{lemma}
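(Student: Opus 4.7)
The plan is to prove the two containments $X\subseteq G'$ and $G'\subseteq \langle X\rangle$ separately. The first is achieved by exhibiting each element of $X$ as an explicit commutator of generators; the second is achieved by showing that $\langle X\rangle$ is a normal subgroup of $G$ and that $G/\langle X\rangle$ is abelian, which forces $G'\subseteq \langle X\rangle$.

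For $X\subseteq G'$, I would use relations (2) and (5). The Steinberg relation gives
$[t_{k,l-1}(\beta),t_{l-1,l}(1)]=t_{kl}(\beta)$ whenever $l\geq k+2$, placing all the long-range transvections in $G'$. For the short-range generators $t_{i,i+1}((1-\alpha)\beta)$, relation (5) with $k=j=i+1$ yields $d_{i+1}(\alpha^{-1})t_{i,i+1}(-\beta)d_{i+1}(\alpha)=t_{i,i+1}(-\alpha\beta)$, so that a direct computation of $[d_{i+1}(\alpha),t_{i,i+1}(\beta)]$ produces $t_{i,i+1}((1-\alpha)\beta)$. Thus every element of $X$ is a commutator.

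For $G'\subseteq \langle X\rangle$, first I would check that $\langle X\rangle$ is normal in $G$. For conjugation by a diagonal $d_m(\gamma)$, relation (5) shows that $d_m(\gamma)^{-1}t_{kl}(\beta)d_m(\gamma)$ is again a transvection $t_{kl}(\beta')$, so elements $t_{kl}(\beta)\in X$ with $l>k+1$ and elements $t_{i,i+1}((1-\alpha)\beta)$ are mapped to the same form (only the scalar $\beta$ changes). For conjugation by a transvection $t_{ij}(\delta)$, the formula $t_{ij}(\delta)^{-1}xt_{ij}(\delta)=x\cdot[x,t_{ij}(\delta)]$ reduces the question to the Steinberg relations, and one checks that whenever $x=t_{kl}(\beta)$ with $l>k+1$ or $x=t_{i,i+1}((1-\alpha)\beta)$, the commutator $[x,t_{ij}(\delta)]$ (if nontrivial) is a transvection $t_{ab}(\cdot)$ with $b-a\geq 2$, hence in $X$. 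Next I would verify that every commutator of generators of $G$ lies in $\langle X\rangle$: commutators of two transvections are either trivial or transvections $t_{ab}(\cdot)$ with $b-a\geq 2$ by (2); commutators of two diagonals are trivial by (4); and commutators $[d_k(\alpha),t_{ij}(\beta)]$ computed as above are either trivial, or (when $j-i\geq 2$) lie in $X$, or (when $j=i+1$) equal some $t_{i,i+1}((1-\alpha')\beta')\in X$. Since $\langle X\rangle$ is normal and contains all commutators of generators, $G/\langle X\rangle$ is abelian, giving $G'\subseteq \langle X\rangle$.

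The main obstacle is not conceptual but bookkeeping: one must track the effect of both the conjugation action in relation (5) and the Steinberg case split on the precise indices $(i,j,k,l)$ involved, and verify in each case that the resulting transvection's column index exceeds its row index by at least two (placing it in $X$ automatically) or has the form $(1-\alpha)\beta$ on an adjacent superdiagonal (placing it in $X$ by its second defining family). The hypothesis that the deformation only affects the multiplication on $D_n$ (relation (3)) and not the Steinberg or conjugation relations is what makes the computation identical to the classical $\TR$ case, so the 2-cocycles $f_i$ never appear in the argument.
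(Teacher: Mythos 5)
Your proof is correct and rests on the same two computations the paper uses: the Steinberg relation $[t_{k,l-1}(\beta),t_{l-1,l}(1)]=t_{kl}(\beta)$ puts the long-range transvections in $G'$, and the commutator $[d(\alpha),t_{i,i+1}(\beta)]=t_{i,i+1}((1-\alpha^{\pm1})\beta)$ puts the adjacent ones in $G'$; and in the reverse direction both arguments come down to checking that $\langle X\rangle$ absorbs conjugation by transvections (via Steinberg, which lands in long-range transvections) and by the torus (via relation (5), which only rescales the parameter), and that the only nontrivial generator commutators of the third type $[d_k(\alpha),t_{ij}(\beta)]$ are exactly the generators of $X$. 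You also correctly observe that relation (3) plays no role: it deforms $D_n$ only by the central $\mathrm{diag}(f_i(\cdot,\cdot))$ factors, which drop out of every conjugation, so the whole computation is the classical one for $\TR$.

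The one place you genuinely diverge from the paper is in how the containment $G'\subseteq\langle X\rangle$ is packaged. You first prove $\langle X\rangle\trianglelefteq G$ and then invoke the standard criterion that a normal subgroup containing every commutator of generators gives an abelian quotient. The paper instead takes an arbitrary commutator $[x,y]$, splits $x=x_1x_2$ and $y=y_1y_2$ into torus and unipotent parts, expands $[x_1x_2,y_1y_2]$ by the usual commutator identities into four conjugated factors, and shows each factor lies in $N=\langle X\rangle$; the normality of $N$ (e.g.\ that $G_u$ normalizes $N$ because $(G_u)'\leq N$) is used along the way rather than stated up front. Your organization makes the normality verification explicit and spares the reader the commutator-expansion bookkeeping; the paper's version is more self-contained in that it never appeals to the generator-commutator criterion, but it requires tracking four conjugated factors. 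Both are fine, and the underlying index-chasing is identical.
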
 

\begin{proof}
	Let $N$ denote the subgroup generated by $X$ and $G_u$ the subgroup generated by all the $t_{ij}(\beta)$, $\beta\in R$. Each $t_{kl}(\beta)$, with $l-k\geq 2$ is already a commutator by definition, and $$d_i(\alpha^{-1})t_{i,i+1}(-\beta)d_{i}(\alpha)t_{i,i+1}(\beta)=t_{i,i+1}((1-\alpha^{-1})\beta),$$for any $\alpha\in \RM$ and $\beta\in R$, hence $N\leq G'$. To prove the reverse inclusion firstly note that since $G/G_u$ is abelian, $G'\leq G_u\cong \UTR$. Now, pick $x,y\in G$. Then, $x=x_1x_2$ and $y=y_1y_2$, where $x_1,y_1\in D_n(R,\bar{f})$, and $x_2,y_2\in G_u$. Now, 
	\begin{align*}[x,y]&=[x_1x_2,y_1y_2]\\
	&=[x_1,y_1]^{z_1}[x_1,y_2]^{z_2}[x_2,y_1]^{z_3}[x_2,y_2]^{z_4},\\
	&= [x_1,y_2]^{z_2}[x_2,y_1]^{z_3}[x_2,y_2]^{z_4}\end{align*}
	for some $z_i\in G$, $i=1, \ldots 4$. The commutator $[x_2,y_2]\in (G_u)'$, where $(G_u)'$ is characteristic in $G_u$ so normal in $G$. Therefore, $[x_2,y_2]^{z_4}$ is a product of $t_{ij}(\beta)$, $i+1<j$. The commutators $[x_2,y_1]$ and $[x_1,y_2]$ are of the same type. Let us analyze one of them. Indeed, $x_2=d_1(\alpha_1)\cdots d_n(\alpha_n)$, and $y_1=t_{12}(\beta_{12})\cdots t_{1n}(\beta_{1n})$. So, $[x_2,y_1]$ is a product of conjugates of commutators of type $[d_k(\alpha),t_{ij}(\beta)]$. In case that $j>i+1$ this is conjugate of a $t_{ij}(\beta)\in G_u'$ which was dealt with above and is an element of $N$. It remains to analyze the conjugates of $t=t_{i,i+1}((\alpha-1)(\beta))$. Consider $z=xy$, $x=d_1(\alpha_1)\cdots d_n(\alpha_n)\in E_n$ , $y\in G_u$. Then $t^x= t_{i,i+1}((\alpha-1)\alpha_i^{-1}\alpha_{i+1}\beta) \in N$ and $N$ is normalized by $y$. This completes the proof.       
\end{proof}

\begin{corollary}\label{G'-def:cor} If $G=\TRf$, then both $G'$ and $\sqrt{G'}$ are absolutely definable in $G$.\end{corollary}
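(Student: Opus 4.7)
My approach is to establish that $G'$ has finite commutator width in $G=\TRf$; a parameter-free existential formula then defines $G'$, and $\sqrt{G'}=\{x\in G:x^2\in G'\}$ is defined by one further application of that formula.

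The first step is to observe that the generating set $X$ exhibited in Lemma~\ref{G':lemma} consists entirely of \emph{single} commutators in $G$. For $l-k\ge 2$ the Steinberg relation (2) gives $t_{kl}(\beta)=[t_{k,k+1},t_{k+1,l}(\beta)]$; for the superdiagonal generators, a direct computation from relation (5) yields
\[
[d_k(\alpha^{-1}),t_{k,k+1}(\beta)]=t_{k,k+1}((1-\alpha)\beta),\qquad \alpha\in\RM,\ \beta\in R.
\]

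The second step is to put each element of $G'$ into an ordered product of a bounded number of such commutators. Since $G/G_u\cong E_n(R,\bar f)$ is abelian, $G'\subseteq G_u\cong \UTR$, and every element of $G_u$ admits a unique ordered normal form $g=\prod_{k<l}t_{kl}(\beta_{kl})$. Lemma~\ref{G':lemma} then identifies $G'$ as precisely those $g$ whose superdiagonal coefficients $\beta_{k,k+1}$ lie in the additive ideal $J=(1-\RM)R\subseteq R$. Thus each $g\in G'$ decomposes into $(n-1)(n-2)/2$ non-superdiagonal factors (each a single commutator by step one) together with $n-1$ superdiagonal factors $t_{k,k+1}(j_k)$ with $j_k\in J$. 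Writing $j_k=\sum_{i=1}^{c}(1-\alpha_i)\beta_i$ realises the $k$th superdiagonal factor as a product of $c$ single commutators via the boxed identity above.

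The crux is therefore step three: to bound the additive width of $J$ by a single constant $c$. For $R=\OR$, this follows from Dirichlet's unit theorem (so $\OM$ is finitely generated, yielding finitely many ideal generators $1-\alpha_i$) together with the fact that $\OR$ is Dedekind (so every ideal is $2$-generated), giving an explicit $c$; for $R\equiv\OR$ the same $c$ works by elementary equivalence. Setting $N=(n-1)(n-2)/2+(n-1)c$, the parameter-free formula
\[
\phi_N(x):\quad \exists y_1,z_1,\ldots,y_N,z_N\quad x=[y_1,z_1][y_2,z_2]\cdots[y_N,z_N]
\]
defines $G'$, and $\phi_N(x\cdot x)$ then defines $\sqrt{G'}$.

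The main obstacle is this bounded-width statement for $J=(1-\RM)R$: it is a genuine ring-theoretic input, not a formal consequence of the group-theoretic setup. It is available in the setting of this paper (where $R$ is eventually required to be $\OR$ or a model of its complete theory), but would demand separate verification for a completely arbitrary commutative ring $R$.
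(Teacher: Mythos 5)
Your approach via finite commutator width matches the paper's implicit one: the Corollary carries no proof in the text, but the proof of Lemma~\ref{utofi:lem} asserts exactly that $G'$ ``is absolutely and uniformly definable in $G$ since it is of finite width,'' and your argument supplies the justification the paper leaves unwritten. One small correction in your step three: the Dedekind-domain input is a red herring, since $2$-generation of $J$ as an ideal of $\OR$ does not bound the number of summands $(1-\alpha_i)\beta_i$ unless those two ideal generators are themselves of the form $1-\alpha$. The Dirichlet step already suffices: if $\OM=\langle\gamma_1,\ldots,\gamma_k\rangle$, then the identities $1-\alpha\beta=(1-\alpha)+\alpha(1-\beta)$ and $1-\alpha^{-1}=-\alpha^{-1}(1-\alpha)$ show $J=(1-\gamma_1,\ldots,1-\gamma_k)$, so every element of $J$ is a sum of $c=k$ terms $\beta_i(1-\gamma_i)$, and this bound transfers to $R\equiv\OR$ exactly as you describe.

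Your closing caveat is well-placed and cuts against a literal reading of the statement. In $\TRf$ a single commutator $[x,y]$ contributes at most two summands $(1-\alpha)\beta$ to any fixed superdiagonal coordinate: decompose $x=x_1x_2$, $y=y_1y_2$ along $E_n\ltimes G_u$, and observe that the superdiagonal contribution comes only from (conjugates of) $[x_1,y_2]$ and $[x_2,y_1]$, each giving one such term. Hence a product of $N$ commutators yields at most $2N$ summands in each superdiagonal coordinate, and if the ideal $(1-\RM)R$ has unbounded additive width with respect to the generating set $\{(1-\alpha)\beta:\alpha\in\RM,\beta\in R\}$ then $G'$ has unbounded commutator width and the formula $\phi_N$ fails to define it for every $N$. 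So the finite-width argument establishes the Corollary only under a genuine ring-theoretic hypothesis on $R$; that hypothesis holds for $R=\OR$ and for $R\equiv\OR$, which covers every use made of the Corollary in the paper (the groups of Lemma~\ref{utofi:lem} and Theorem~\ref{TnMain:thm}), but it is not automatic for an arbitrary commutative ring, and the Corollary as stated for general $\TRf$ is in that respect an overstatement that your proof correctly does not attempt to reach.
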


\begin{rem}\label{split:rem}Assume for an abelian group $A$ we have $A\cong T \times B$ where $T$ and $B$ are some subgroups of $A$. Consider a symmetric 2-cocycle $f: A \to A$. By abuse of notation we consider $f$ as $f: T \cdot B \to T \times B$. Then $f$ is cohomologous to $(g_1g_2, h_1h_2)$ where $g_1\in S^2(T,T))$, $g_2\in S^2(T,B)$, $h_1\in S^2(B,T)$ and finally $h_2\in S^2(B,B)$. We will use this notation in the following. 
	
\end{rem}

\subsubsection{Characterization Theorem} 
\begin{thm}\label{TnMain:thm}Assume $H$ is a group. If $H\equiv \TO$ as groups, then  $H\cong \TRf$ for some $R\equiv \OR$ and CoT 2-cocycles $f_i$.\end{thm}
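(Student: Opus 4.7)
My plan is to combine the bi-interpretability of a suitably chosen absolutely definable subgroup of $\TO$ with $\OR$ (provided by Corollary \ref{TO-qfa:cor}) with an extension-theoretic analysis of the abelian torus part that this subgroup does not capture.

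\textbf{Ring and unipotent part.} By Corollary \ref{G'-def:cor} the subgroup $\sqrt{G'}$ is absolutely definable in $G = \TO$. A direct computation, in the spirit of Lemma \ref{G':lemma}, shows that $\sqrt{G'} = E \cdot \UTO$, where $E = \{d \in D_n(\OR) : d^2 = \one\}$ is the finite $2$-torsion diagonal subgroup; in particular $\sqrt{G'}$ contains every $d_i$ and therefore satisfies the hypothesis of Corollary \ref{TO-qfa:cor}, so it is bi-interpretable with $\OR$. Imitating the proof of Lemma \ref{utofi:lem}, the hypothesis $H \equiv G$ then yields that the absolutely definable subgroup $\sqrt{H'}$ of $H$ interprets a ring $R \equiv \OR$ and is isomorphic to $\UTR \cdot E$, producing an absolutely definable copy $U \leq H$ of $\UTR$.

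\textbf{Diagonal lifts and cocycles.} In $G$, each dilation $d_i(\alpha)$ is determined modulo $Z(G)$ by its conjugation action on $U$ (relation (5) of the definition of $\TRf$). This characterisation is first-order expressible through the interpretation of $\OR$ in $G$, hence transfers to $H$: for every $i$ and every $\alpha \in \RM$ there is an element $D_i(\alpha) \in H$ realising relation (5), determined by $\alpha$ up to multiplication by $Z(H) \cong \RM$, and any two of the $D_i(\alpha)$ commute (as their counterparts in $G$ do). The first-order statement that every element of $G$ admits a factorisation $d_1(\alpha_1)\cdots d_n(\alpha_n)\cdot u$ with $\alpha_i \in \OM$ and $u \in \UTO$ also transfers, so the abelian subgroup $D = \langle D_i(\alpha) : 1\leq i\leq n, \alpha \in \RM \rangle$ complements $U$ in $H$. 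Setting
$$D_i(\alpha)\,D_i(\beta) = D_i(\alpha\beta)\cdot \mathrm{diag}(f_i(\alpha,\beta))$$
defines symmetric $2$-cocycles $f_i \in S^2(\RM,\RM)$, and because relations (1)--(5) of Section \ref{abdef:sec} all hold in $H$, we obtain $H \cong T_n(R,\bar f)$.

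\textbf{CoT property and main difficulty.} The torsion subgroup $\mu \leq \RM$ is finite cyclic of the same order $k$ as $\mu(\OR)$, because this is a first-order property of the ring. The sentence asserting $d_i(\zeta)^k = \one$ whenever $\zeta^k = 1$ holds in $G$ and hence in $H$, so by sending a generator of $\mu$ to a lift of the same finite order we may refine $\zeta \mapsto D_i(\zeta)$ on $\mu$ to be a group homomorphism. This forces $f_i|_{\mu \times \mu} \equiv 1$, hence $f_i|_{\mu\times\mu}$ is a coboundary and each $f_i$ is CoT. The most delicate step in this program is the construction of the complement $D$: one must make a uniform-in-$\alpha$ choice of the lifts $D_i(\alpha)$, and the cohomology class of the resulting $\bar f$ exactly measures the failure of $\TO$ to be bi-interpretable with $\Z$ when $\OM$ is infinite.
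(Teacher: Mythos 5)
Your proposal follows the same architecture as the paper's proof: recover $R\equiv\OR$ and a copy of $\UTR$ via the absolutely definable subgroup $\sqrt{G'}$ and Lemma~\ref{utofi:lem}, rebuild the torus from the $Z(H)$-cosets of dilations detected by their conjugation action on the unipotent part, read off the symmetric $2$-cocycles $f_i$, and then establish the CoT property by a first-order transfer argument. The only deviation is cosmetic: where the paper verifies CoT by pinning down $T(\Delta_i(H))\cong T(\OM)\times T(\OM)$ together with the factoring sentence $x^N\in Z\Rightarrow x=yz$, you instead lift a generator of the (cyclic, fixed-order) torsion subgroup $\mu\leq\RM$ to an element of the same order and extend multiplicatively to a splitting over $\mu$ -- a slightly more compact packaging of the same observation; one should phrase the transferred sentence intrinsically (``for all $x\in\Delta_i(G)$ with $x^k\in Z(G)$ there is $z\in Z(G)$ with $(xz)^k=\one$'') rather than via the informal ``$d_i(\zeta)^k=\one$,'' since the lifts $D_i(\zeta)$ are only defined up to $Z(H)$.
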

\begin{proof}
The subgroup $\sqrt{H'}$ (See proof of Lemma~\ref{utofi:lem}) is definable in $H$ by the same formulas which define $\sqrt{G'}\cong \UTO \rtimes A$ in $G$, where $A$ is a finite subgroup of $D_n$ including all the $d_i$. Therefore, by Lemma~\ref{utofi:lem}, $\sqrt{H'}\cong \UTR \rtimes A$, for some $R\equiv \OR$. 

For each $k=1,\ldots, n$ the subgroup $\Delta_k(G)=d_k(\OM)\cdot Z(G)$ is definable in $G$ as the subgroup of $D_n$ consisting of all $x$:
$$t_{ij}^x=\left\{\begin{array}{ll}
	t_{ij}  & \text{if } i\neq k, j\neq k \\
	t_{ij}(\alpha), \alpha\in \OM &\text{if } i=k \text{ or } j=k
\end{array}\right.$$
Note that the above is expressible by $\Lgroups$-formulas. Therefore, for each $k$ there exists an interpretable isomorphism $\Delta_k/Z(G) \to \OM$. One can also express in $\Lgroups$, via the interpretable isomorphisms mentioned above, that $Z(G)= \{\prod_{k=1}^n d_k(\alpha):\alpha\in \OM\}$.

The facts that $D_n \cap \UTO =\one$ and that $\UTO$ is normal in $\TO$ are also expressible using $\Lgroups$-formulas. Now moving to $H$, the same formulas define a subgroup $E_n=E_n(\bar{e})$ of $H$, so that $E_n(\bar{e})/Z(H)\cong (\RM)^{(n-1)}$ and $Z(H)\cong \RM$, $H_u\cap E_n=\one$ and imply that the action of $E_n$ on $H_u\lhd H$ is an extension of the action of $D_n$ on $\UTO$. This proves that $H\cong \UTR \rtimes E_n$, where $E_n\cong D_n(R, \bar{f})$.

The torsion subgroup $T(\Delta_i(G))$ of $\Delta_i(G)$ is finite. Let $N$ be its exponent. For all $n$, the sentences $\forall x\in \Delta_i(G)( x^n=1 \to x^N=1)$ hold in $G$, hence in $H$. So, the formula $x^N=1$ defines $T(\Delta_i(G))$ in $G$ as well as $T(\Delta_i(H))$ in $H$. Hence, $ T(\Delta_i(H))\cong T(\Delta_i(\OM))\cong T(\OM)\times T(\OM)$. In addition, the following holds in $G$, and consequently in $H$:
$$\forall x\in \Delta_i(G),  \exists y\in T(\Delta_i(G)), \exists z\in Z(G)( x^N\in Z(G)  \to (x=yz ))).$$
This ensures that in $H\equiv G$ each $f_i$ defining the $\Delta_i(H)$ as an extension of $Z(H)\cong \RM$ by $\Delta_i(H)/Z(H)\cong \RM $is CoT.

 \end{proof}
\subsubsection{Sufficiency of the characterization}
In this section we shall prove that the necessary condition proven in Theorem~\ref{TnMain:thm} is also sufficient.
 
We will need to state a few well-known definitions and results.

Let $B$ be an abelian group and $A$ a subgroup of $B$. Then $A$ is called a \emph{pure subgroup of $B$} if $\forall n\in \mbb{N}$, $nA=nB\cap A$.
\begin{lemma}\label{pure:lem}Let $A\leq B$ be abelian groups such that the quotient group $B/A$ is torsion-free. Then $A$ is a pure subgroup of $B$.\end{lemma}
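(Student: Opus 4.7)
The plan is to establish the two inclusions $nA \subseteq nB \cap A$ and $nB \cap A \subseteq nA$ for every $n \in \mathbb{N}$. The first is immediate from $A \leq B$ and the fact that $A$ is closed under the group operation, so no work is needed there.

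For the reverse inclusion, I would take an arbitrary element $x \in nB \cap A$ and write $x = nb$ for some $b \in B$, with $x \in A$. The key idea is to pass to the quotient $B/A$: since $x \in A$, the image $\overline{x}$ is trivial in $B/A$, so $n\overline{b} = \overline{nb} = \overline{x} = \overline{0}$. Now invoke the torsion-free hypothesis on $B/A$, which forces $\overline{b} = \overline{0}$, i.e.\ $b \in A$. Consequently $x = nb \in nA$, as desired.

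The argument is entirely routine and there is no real obstacle; the only subtlety to remark on is that the purity definition being used is the multiplicative/additive convention $nA = nB \cap A$ (writing the abelian groups additively when forming $nA$), which matches the statement in the paper. I will write the proof additively since the groups are abelian, even though the ambient groups in the rest of the paper are written multiplicatively.
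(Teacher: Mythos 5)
Your argument is correct and is essentially the same as the paper's: both reduce to showing that $nb \in A$ with $B/A$ torsion-free forces $b \in A$. The only cosmetic difference is that you argue directly through the quotient while the paper frames the same step as a proof by contradiction; your direct phrasing is slightly cleaner.
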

\begin{proof}One direction is trivial. For the other direction assume that $g\in nB\cap A$. Then there is $h\in B$ such that $g=nh$. to get a contradiction assume that $h\notin A$. Then $g=nh\notin A$ since $B/A$ is torsion free. A contradiction! So $h\in A$, therefore $g=nh\in nA$.\end{proof}

An abelian group $A$ is called \emph{pure-injective} if $A$ is a direct summand in any abelian group $B$ that contains $A$ as a pure subgroup.

The following theorem expresses a connection between pure-injective groups and uncountably saturated abelian groups.
\begin{thm}[\cite{eklof}, Theorem 1.11]\label{ekthm} Let $\kappa$ be any uncountable cardinal. Then any $\kappa$-saturated abelian group is pure-injective.\end{thm}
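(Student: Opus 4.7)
The plan is to prove the theorem through the classical bridge between model-theoretic saturation and the algebraic notion of pure-injectivity for abelian groups. The $\kappa$-saturation hypothesis with uncountable $\kappa$ will only be used via its consequence that $A$ is $\aleph_{1}$-saturated, so first-order 1-types over countable parameter sets are realized.

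My first step would be to replace pure-injectivity by its standard equivalent characterization for abelian groups: $A$ is pure-injective if and only if $A$ is \emph{algebraically compact}, i.e.\ every system
\[
\Sigma=\Bigl\{\textstyle\sum_{i\in I} n_{ij}x_{i}=a_{j}\;:\;j\in J\Bigr\}
\]
of $\mathbb{Z}$-linear equations with parameters $a_{j}\in A$ (each equation of finite support in the variables $\{x_{i}\}_{i\in I}$) that is finitely solvable in $A$ admits a simultaneous solution in $A$. This equivalence, going back to Warfield and Fuchs, reduces the theorem to proving algebraic compactness. Next I would invoke the classical reduction (Mycielski; see Fuchs, \emph{Infinite Abelian Groups}) that an abelian group is algebraically compact if and only if it is \emph{countably} algebraically compact: it is enough to consider systems $\Sigma$ with $|\Sigma|$, $|I|$ and the parameter set all countable.

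So let $\Sigma$ be a countable finitely-satisfiable system, in variables $(x_{n})_{n<\omega}$ with parameters from a countable set $A_{0}\subseteq A$. I will build a solution $(c_{n})_{n<\omega}\in A^{\omega}$ by recursion on $n$, maintaining the invariant that the system obtained from $\Sigma$ by substituting $x_{i}=c_{i}$ for $i<n$ is still finitely satisfiable in the remaining variables. At stage $n$, the set of admissible choices for $c_{n}$ is captured by the countable collection of pp-formulas
\[
p_{n}(y)=\Bigl\{\exists x_{n+1}\cdots x_{n+k}\,\textstyle\bigwedge_{E\in F}E(c_{0},\ldots,c_{n-1},y,x_{n+1},\ldots,x_{n+k})\;:\;F\subseteq\Sigma\text{ finite}\Bigr\},
\]
a partial 1-type over the countable parameter set $A_{0}\cup\{c_{0},\ldots,c_{n-1}\}$. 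By the inductive invariant, every finite conjunction drawn from $p_{n}(y)$ corresponds to a finite subsystem of $\Sigma$ that remains solvable after the substitutions already performed, so $p_{n}(y)$ is finitely satisfiable in $A$. By $\aleph_{1}$-saturation $p_{n}(y)$ is realized by some $c_{n}\in A$, preserving the invariant. Passing to the limit, the sequence $(c_{n})_{n<\omega}$ satisfies every $E\in\Sigma$, since any such $E$ has support in some finite initial segment $\{x_{0},\ldots,x_{m}\}$ and the invariant at stage $m+1$ forces $E$, after substitution, to be the true equation.

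The main obstacle is not the inductive construction but the Mycielski reduction from arbitrary to countable systems: the natural variable-by-variable argument using only $\aleph_{1}$-saturation does not transparently handle uncountably many variables, so one needs the structure theory of algebraically compact abelian groups (or an equivalent transfinite amalgamation) to reduce the general case to the countable one. That classical reduction is what explains why saturation at the modest level $\aleph_{1}$ already suffices, independently of the cardinality of any potential pure extension $B\supseteq A$.
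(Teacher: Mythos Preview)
The paper does not supply its own proof of this theorem: it is quoted verbatim as Theorem~1.11 of Eklof--Fischer and used as a black box in the proof of Lemma~\ref{saturated-split:lem}. So there is no ``paper's proof'' to compare against; your proposal is simply a reconstruction of the classical argument behind the cited result.

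As such a reconstruction, your outline is sound. The chain of equivalences pure-injective $\Longleftrightarrow$ algebraically compact $\Longleftrightarrow$ countably algebraically compact is exactly the standard route, and your recursive realization of the pp-type $p_n(y)$ using $\aleph_1$-saturation is correct. You are also right to flag the Mycielski reduction as the genuine content: without it, $\aleph_1$-saturation alone would not obviously handle systems of arbitrary cardinality, and it is this reduction that makes the uncountability of $\kappa$ (rather than, say, $\kappa$ exceeding $|B|$ for the ambient pure extension) sufficient. If you wanted to make the write-up self-contained you would need to either prove that reduction or give an alternative argument, for instance via the structure theorem expressing algebraically compact groups as summands of products of $p$-adic and rational completions; but for the purposes of this paper, citing Eklof--Fischer is entirely appropriate and nothing more is expected.
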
 

\begin{rem}
	\label{finducef*:rem}
	Assume $A$ and $B$ are abelian group and $f\in S^2(B,A)$. Let $\D$ be an ultrafilter on a set $I$. Let $A^*$ and $B^*$ denote the ultrapowers of $A$ and $B$, respectively, over $(I,\D)$. Then $f$ induces a natural 2-cocycle $f^*\in S^2(B^*,A^*)$ representing an abelian extension of $A^*$ by $B^*$ (See Lemma 7.1 of~\cite{MS2009} for details). \end{rem}
\begin{lemma}\label{saturated-split:lem}  Assume $f\in S^2(\OM, \OM)$ is CoT and $(I,\D)$ is an ultrafilter so that ultraproduct $(\OM)^*$ of $\OM$ over $\D$ is $\aleph_1$-saturated. Then the 2-cocycle $f^* \in S^2((\OM)^*,(\OM)^*)$ induced by $f$ is a 2-coboundary.\end{lemma}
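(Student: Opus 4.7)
The plan is to split the cohomology class $[f^*]$ into its torsion and torsion-free components along the natural decomposition of $(\OM)^*$, and show that each component vanishes separately. By Dirichlet's units theorem, $\OM = T \times F$ where $T = T(\OM)$ is the finite group of roots of unity in $\OR$ and $F$ is finitely generated free abelian. Since $T$ is finite, its ultrapower is $T$ itself, so $(\OM)^* \cong T \times F^*$ with $F^*$ torsion-free (this being a first-order property) and $\aleph_1$-saturated. Under the canonical isomorphism
$$Ext((\OM)^*,(\OM)^*) \cong Ext(T,(\OM)^*) \oplus Ext(F^*,(\OM)^*)$$
arising from the direct sum decomposition in the first coordinate, it suffices to show that $[f^*]$ is trivial in each summand.

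For the torsion summand, I would observe that elements of $T \subset (\OM)^*$ are represented by constant sequences, so by the explicit formula for the induced cocycle from Remark~\ref{finducef*:rem}, the restriction $f^*|_{T \times T}$ agrees with $f|_{T \times T}$ composed with the canonical embedding $\iota: \OM \hookrightarrow (\OM)^*$. Since $f$ is CoT by hypothesis, $f|_{T \times T}$ is a 2-coboundary coming from some $\psi: T \to \OM$; then $\iota \circ \psi : T \to (\OM)^*$ witnesses that $f^*|_{T \times T}$ is a 2-coboundary, so the torsion component of $[f^*]$ is zero.

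For the torsion-free summand, I would form the abelian extension
$$1 \to (\OM)^* \to E \to F^* \to 1$$
classified by $f^*|_{F^* \times F^*}$. Because the quotient $E/(\OM)^* \cong F^*$ is torsion-free, Lemma~\ref{pure:lem} guarantees that $(\OM)^*$ is a pure subgroup of $E$. By hypothesis $(\OM)^*$ is $\aleph_1$-saturated, hence pure-injective by Theorem~\ref{ekthm}, so it is a direct summand of $E$. Thus the extension splits, and the torsion-free component of $[f^*]$ is also trivial. Combining both observations yields $[f^*] = 0$, i.e., $f^*$ is a 2-coboundary.

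The main technical obstacle is the bookkeeping that identifies the two components of $[f^*]$ with the image of $[f|_{T \times T}]$ under the natural map $Ext(T,\OM)\to Ext(T,(\OM)^*)$ and with the class of $f^*|_{F^* \times F^*}$, respectively; one must verify that the direct-sum decomposition of $Ext$ is compatible with the passage from $f$ to $f^*$. Once this compatibility is set up, the torsion part follows at once from the CoT hypothesis, and the torsion-free part reduces to the pure-injectivity package already provided by Lemma~\ref{pure:lem} and Theorem~\ref{ekthm}. Assembling the two partial trivializing functions on $T$ and on $F^*$ into a single $\psi^*:(\OM)^*\to (\OM)^*$ with $f^* = \delta\psi^*$ is then routine.
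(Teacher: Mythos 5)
Your proof is correct and follows essentially the same route as the paper: decompose $\OM$ via Dirichlet's theorem, use the CoT hypothesis to kill the torsion part, and use $\aleph_1$-saturation together with pure-injectivity (Theorem~\ref{ekthm}) and purity (Lemma~\ref{pure:lem}) to kill the torsion-free part. The one genuine, if modest, streamlining is that you decompose $Ext$ only in the first argument, so a single appeal to pure-injectivity of $(\OM)^*$ handles the entire torsion-free summand $Ext(F^*,(\OM)^*)$; the paper's Remark~\ref{split:rem} decomposes in both arguments into four pieces $g_1^*,g_2^*,h_1^*,h_2^*$ and then needs two different arguments for $h_1^*\in S^2(B^*,T^*)$ (that $Ext(B^*,T^*)=0$ since $T^*$ is finite pure-injective and pure in the extension) and $h_2^*\in S^2(B^*,B^*)$ (pure-injectivity of $B^*$), whereas your coarser split absorbs both into one step.
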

\begin{proof}
	Firstly, by Dirichlet's units theorem $\OM= T \times B$, where $T$ is finite, and $B$ is f.g. and torsion free, if it is not trivial. Note that $(\OM)^*=(\OR^*)^\times$, hence $T((\OM)^*)=(T(\OM))^* \cong T$, where $T(\RM)$ is the maximal torsion subgroup of the group of units $\RM$ of the ring $R$. Then $\OR^* \cong T \cong B^*$. If $B$ is trivial, there is nothing to prove. Assume it is not trivial. Since $\OM$ is infinite, $(\OM)^*$ is $\aleph_1$-saturated. Since $T^*$ is finite abelian and $B^*$ is torsion-free  $Ext(B^*,T^*)=\one$. The assumption that $f$ is CoT implies that the induced cocycle $f^*\in S^2((\RM)^*,(\RM)^*)$ is CoT. Then, using the notation of Remark~\ref{split:rem}, we have that $g_1^*\in S^2(T^*,T^*)$ and $g^*_2\in S^2(T^*,B^*)$ are both coboundaries. The 2-cocycle  $h_1^*\in S^2(B^*,T^*)$ is a coboundary since $B^*$ is torsion-free abelian and $T^*$ is finite abelian. The 2-cocycle $h_2^*\in S^2(B^*,B^*)$ is also a coboundary since $B^*$ is pure-injective by Theorem~\ref{ekthm} and also $B^*$ is a pure subgroup in an extension represented by $h_2^*$ because of Lemma~\ref{pure:lem}. Consequently, $f^*$ is a coboundary.
\end{proof}

\begin{thm} Any group $\TRf$, where $R\equiv \OR$ and each $f_i$ is CoT is elementarily equivalent to $\TO$.\end{thm}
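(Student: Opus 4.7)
The plan is to prove elementary equivalence by the Keisler--Shelah theorem, that is, by exhibiting an ultrafilter $\D$ on some index set $\I$ such that the ultrapowers of $\TRf$ and $\TO$ over $\D$ are isomorphic as groups. Since $R\equiv\OR$ as rings, Keisler--Shelah itself furnishes an ultrafilter $\D$ on some $\I$ with a ring isomorphism $R^\I/\D \cong \OR^\I/\D$; by enlarging $\I$ and taking $\D$ to be $\aleph_1$-good, I would simultaneously arrange that the resulting ultrapowers are $\aleph_1$-saturated. Writing $R^*=R^\I/\D$ and $\OR^*=\OR^\I/\D$, I obtain a fixed ring isomorphism $R^*\cong\OR^*$, and in particular the unit groups $(R^*)^\times\cong(\OR^*)^\times$ are $\aleph_1$-saturated.

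Next I would identify the two ultrapowers $(\TRf)^\I/\D$ and $(\TO)^\I/\D$ concretely. The group $\TRf$ is defined by the finite list of generators and relations (1)--(5) of Section~\ref{abdef:sec}, uniformly parameterized by $R$, $\RM$, and the function values of the cocycles $f_i$, viewed as additional function symbols on $\RM$. Thus $(\TRf)^\I/\D$ is canonically isomorphic to $\textrm{T}_n(R^*, f_1^*,\ldots, f_{n-1}^*)$, where $f_i^*\in S^2((R^*)^\times,(R^*)^\times)$ is the induced symmetric 2-cocycle (Remark~\ref{finducef*:rem}). Likewise $(\TO)^\I/\D \cong \textrm{T}_n(\OR^*)$.

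The endgame then invokes Lemma~\ref{saturated-split:lem}: since each $f_i$ is CoT and $(R^*)^\times$ is $\aleph_1$-saturated, each induced cocycle $f_i^*$ is a 2-coboundary. Consequently the abelian extension $D_n(R^*,\bar f^*)$ of $Z(\textrm{T}_n(R^*))\cong (R^*)^\times$ by $((R^*)^\times)^{n-1}$ is equivalent to the trivial (split) extension $D_n(R^*)=((R^*)^\times)^n$, yielding a group isomorphism $D_n(R^*,\bar f^*) \cong D_n(R^*)$ that descends to the identity on $D_n/Z$. Because the conjugation action on the unipotent factor $\textrm{UT}_n(R^*)$ appearing in relation (5) depends only on this quotient, the isomorphism extends to $\textrm{T}_n(R^*,\bar f^*) \cong \textrm{T}_n(R^*)$; composing with the ring isomorphism $R^*\cong \OR^*$ from the first step yields $(\TRf)^\I/\D \cong (\TO)^\I/\D$, and Keisler--Shelah delivers $\TRf \equiv \TO$.

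The main obstacle I anticipate is in the first step: coordinating, within a single ultrafilter $\D$, both the ring isomorphism $R^*\cong \OR^*$ (supplied by Keisler--Shelah applied to $R\equiv\OR$) and enough saturation of $(R^*)^\times$ for Lemma~\ref{saturated-split:lem} to trivialize the induced cocycles. A secondary technical point, essentially bookkeeping, is checking that the ultrapower of the generators-and-relations presentation of $\TRf$ really produces $\textrm{T}_n(R^*,\bar f^*)$ with the induced cocycles, and that $f_i^*$ being a coboundary lifts to a full group isomorphism of the semidirect-product groups respecting the action on $\textrm{UT}_n(R^*)$.
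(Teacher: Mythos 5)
Your proof is correct and takes essentially the same route as the paper's: pass to an $\aleph_1$-saturated ultrapower, apply Lemma~\ref{saturated-split:lem} to trivialize the induced cocycles $f_i^*$, identify $(\TRf)^*$ with $T_n(R^*,\bar{f}^*)\cong T_n(R^*)$, and conclude via Keisler--Shelah; the coordination obstacle you anticipate is dispatched in the standard way by replacing the Keisler--Shelah ultrafilter with its product against a fixed countably incomplete ultrafilter on $\omega$. One small terminological correction: to guarantee $\aleph_1$-saturation of the ultrapower you want $\D$ to be countably incomplete (that is, $\aleph_1$-incomplete), not ``$\aleph_1$-good''---the latter holds automatically for every ultrafilter.
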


\begin{proof}  Let $(I,\D)$ be an $\aleph_1$-incomplete ultrafilter. As usual, by $C^*$ we mean the ultrapower $C^I/\D$ of a structure $C$. Then $B((R^*)^\times)=B((\RM)^*)=B^*(\RM)$ is either trivial or $\aleph_1$-saturated. If $f^*_i\in S^2((\RM)^*,(\RM)^*)$ denotes the 2-cocycle induced by $f_i$ then for each $i=1, \ldots,n-1$, $f^*_i$ is a 2-coboundary by Lemma~\ref{saturated-split:lem}. The fact that $T^*_n(R,\bar{f})\cong T_n(R^*,\bar{f^*})$ requires only some routine checking. Therefore, \[T^*_n(R,\bar{f})\cong T_n(R^*, \bar{f^*})\cong T_n(R^*)\cong T_n^*(R).\]  
	This concludes the proof utilizing Keisler-Shelah's Theorem.\end{proof}
\section{ The case of $\GLO$}\label{GLO:sec}
\subsection{Bi-interpretability with $\OR$}
\begin{thm}\label{glnmain:thm} Assume $\OR$ has a finite group of units $\OM=\{\alpha_i: i=1,\ldots, m\}$ and let  $d_1(\bar{\alpha})=(d_1(\alpha_1), \ldots, d_1(\alpha_m))$. Then $(\GLO,\bar{t}, d_1(\bar{\alpha}))$ and $\OR$ are bi-interpretable.
\end{thm}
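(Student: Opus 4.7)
The plan is to mimic the proof of Theorem~\ref{sln-biinter:thm}, exploiting the finiteness of $\OM$ to handle the extra ``diagonal factor'' in $\GLO$ by a finite case distinction.

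The first step is to establish a bounded-generation representation for $\GLO$ analogous to the one used for $\SLO$. The determinant gives a short exact sequence $1 \to \SLO \to \GLO \xrightarrow{\det} \OM \to 1$, and the parameters $d_1(\bar\alpha) = (d_1(\alpha_1), \ldots, d_1(\alpha_m))$ form a system of coset representatives of $\SLO$ in $\GLO$: each $g \in \GLO$ lies in the unique coset $d_1(\alpha_s)\,\SLO$ with $\alpha_s = \det(g)$. Combining this with bounded generation of $\SLO$ by transvections \cite{Car-Kell} yields $w \in \N$ and a function $f: \{1, \ldots, w\} \to \{(i,j) : 1 \leq i \neq j \leq n\}$ such that every $g \in \GLO$ admits a representation
\[
g = d_1(\alpha_s) \cdot \prod_{k=1}^{w} t_{f(k)}(\gamma_{f(k)}), \qquad s \in \{1, \ldots, m\}, \ \bar\gamma \in \OR^w.
\]

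With this in place the two interpretations are the natural ones. The interpretation $\Delta$ of $\OR$ in $(\GLO, \bar t)$ is the one supplied by Lemma~\ref{O-interpret-SLO:lem}, realised on the definable subgroup $T_{1n}$. The interpretation $\Gamma$ of $\GLO$ in $\OR$ is the standard absolute polynomial interpretation on $\{x = (x_{ij}) \in \OR^{n^2} : \det(x) \in \OM\}$; the universe is $\Lrings$-definable without parameters because $\OM$ is a $\emptyset$-definable finite subset of $\OR$. The definable isomorphisms $\GLO \cong \tilde{\GLO}$ and $\OR \cong \tilde{\OR}$ are then constructed exactly as in Theorem~\ref{sln-biinter:thm}: the bounded-generation representation above, together with the commutator identities $[t_{1j}, \cdot]$ and $[\cdot, t_{jn}]$ from Lemma~\ref{O-interpret-SLO:lem}, transports each transvection factor to an element of $T_{1n}$, and the choice of coset representative $d_1(\alpha_s)$ is absorbed into a finite disjunction over $s \in \{1, \ldots, m\}$. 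Bi-interpretability with $\Z$ on the enlarged parameter tuple then follows by composition with Lemma~\ref{O-biint-Z:lem}.

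The only ingredient not already present in the $\SLO$ argument is the uniform first-order extraction of the coset representative $d_1(\alpha_s)$ attached to a given $g$, together with a check that $\SLO$ is definable inside $(\GLO, \bar t)$. Neither is a genuine obstacle: each $d_1(\alpha_s)$ is a definable constant because it is one of the parameters, and $\SLO$ is first-order definable as the set of bounded-length products of transvections using $\bar t$ and the ring quantifiers available through $\Delta$. Hence the main (but routine) task is the bookkeeping of the finite disjunction that handles the coset representative.
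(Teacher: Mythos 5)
Your proposal is correct and follows essentially the same route as the paper, which also rests on the observation that $\GLO$ is boundedly generated by the $T_{ij}$ together with the finite subgroup $d_1(\OM)$, notes that all of these are definable with the given parameters, and then refers back to the argument of Theorem~\ref{sln-biinter:thm}. The only difference is expository: you derive the bounded-generation representation explicitly from the short exact sequence $1 \to \SLO \to \GLO \to \OM \to 1$ with coset representatives $d_1(\alpha_s)$ and spell out the finite disjunction over $s$, whereas the paper leaves these routine details implicit.
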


\begin{proof}
We note $\GLO$ is boundedly generated by all the $T_{ij}$ and the finite subgroup $d_1(\OM)\define \langle d_1(\alpha_i): i=1, \ldots,m\rangle$. All these subgroups are definable with the given parameters. The rest of the proof is similar to that of Theorem~\ref{sln-biinter:thm}.\end{proof}
\begin{corollary} If $\OR$ has a finite group of units, then $\GLO$ is QFA and prime.\end{corollary}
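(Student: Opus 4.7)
The strategy is to chain together the two bi-interpretability results already established in the excerpt and then invoke Theorem~\ref{Nies2007}. First, Theorem~\ref{glnmain:thm} gives a bi-interpretation between $(\GLO,\bar t,d_1(\bar\alpha))$ and $\OR$. Second, Lemma~\ref{O-biint-Z:lem} supplies a bi-interpretation between $(\OR,\bar\beta)$ and $\Z$. Composing the two interpretations and the two definable isomorphisms yields a bi-interpretation, with parameters $\bar t$, $d_1(\bar\alpha)$, $\bar\beta$, between $\GLO$ and $\Z$. The only non-trivial thing to check here is that bi-interpretability with parameters composes; this is routine since the parameters of the outer structure are simply appended to those of the inner structure during the composition, and both definable isomorphisms $G\cong\widetilde G$ and $\Z\cong\widetilde\Z$ lift through the composition.

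With bi-interpretability (with parameters) between $\GLO$ and $\Z$ in hand, Theorem~\ref{Nies2007} directly gives primeness of $\GLO$. For the QFA conclusion the same theorem requires that $\GLO$ be finitely generated. This is classical under our hypothesis: by~\cite{Car-Kell} the subgroup $\SLO$ is (boundedly) generated by the finitely many transvections $t_{ij}(\beta_k)$, and the quotient $\GLO/\SLO\cong\OM$ is finite by assumption, so a finite lift of generators of $\OM$ together with the transvections generates $\GLO$. Thus Theorem~\ref{Nies2007} applies and gives QFA.

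The only place any real care is needed is the composition of the two bi-interpretations, since the signatures of $\Z$, $\OR$, and $\GLO$ are all different and the parameter tuples must be tracked carefully through $\Delta\circ\Gamma$ and $\Gamma\circ\Delta$. This is the same bookkeeping already carried out in the proof of Theorem~\ref{sln-biinter:thm}, so no new ideas are required; one simply records that the definable isomorphism $\GLO\to\widetilde{\GLO}$ is obtained by the analogue of the map $\phi$ in that proof, now using bounded generation of $\GLO$ by the $T_{ij}$ together with the finite set $d_1(\OM)$.
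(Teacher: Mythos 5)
Your proof is correct and matches the paper's intended (unwritten) argument: the corollary is meant to follow immediately by combining Theorem~\ref{glnmain:thm} with Lemma~\ref{O-biint-Z:lem} and then applying Theorem~\ref{Nies2007}, together with the standard fact that $\GLO$ is finitely generated when $\OM$ is finite. One small caveat on attribution: finite generation of $\SLO$ by the transvections $t_{ij}(\beta_k)$ (for a $\Z$-basis $\bar\beta$ of $\OR$) is a classical consequence of elementary generation and does not require the bounded-generation result of Carter--Keller, which is invoked elsewhere for a different purpose.
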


\begin{rem}
	If $\OM$ is infinite by Corollary 3 of \cite{Oger2006} and Theorem~\ref{Nies2007}, $\GLO$ is not bi-interpretable with $\Z$. In the following we provide an alternative proof of this fact utilizing Theorem~\ref{aut-biint:thm}.
\end{rem}
\begin{thm}If $\OR$ has an infinite group of units, then $\GLO$ is not bi-interpretable with $\Z$.\end{thm}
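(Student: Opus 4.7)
The plan is to apply Theorem~\ref{aut-biint:thm} to derive a contradiction. Suppose $\GLO$ is bi-interpretable with $\Z$, possibly with parameters $\bar c\subset \GLO$ and $\bar d\subset \Z$. Then the enriched structures $(\GLO,\bar c)$ and $(\Z,\bar d)$ are bi-interpretable (as structures with constants added), so Theorem~\ref{aut-biint:thm} yields
\[
\text{Aut}(\GLO,\bar c)\;\cong\;\text{Aut}(\Z,\bar d)\;\le\;\text{Aut}(\Z)\;=\;\{\mathrm{id}\}.
\]
Hence no non-identity automorphism of $\GLO$ fixes $\bar c$ pointwise. To obtain a contradiction, I would construct, for an arbitrary finite tuple $\bar c$, a non-trivial automorphism of $\GLO$ fixing $\bar c$.

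The natural source of such automorphisms is central-character twists. Given $\chi_0\in\text{Hom}(\OM,\OM)$, set
\[
\phi_{\chi_0}\colon \GLO\to \GLO,\qquad \phi_{\chi_0}(g)=\chi_0(\det g)\cdot g,
\]
with $\chi_0(\det g)$ identified with the scalar matrix in $Z(\GLO)\cong\OM$. A routine check shows that $\phi_{\chi_0}$ is a group homomorphism, that it is an automorphism exactly when $\beta\mapsto\beta\chi_0(\beta)^n$ is a bijection of $\OM$, and that it fixes $c_i$ iff $\chi_0(\det c_i)=1$. Thus it suffices to find a non-trivial $\chi_0$ vanishing on the finitely generated subgroup $H=\langle \det c_1,\ldots,\det c_k\rangle\le\OM$ with bijective twist.

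By Dirichlet's unit theorem, $\OM\cong T\times \Z^r$ with $-1\in T$ and $r\ge 1$, as $\OM$ is infinite. Let $s$ be the rank of the image of $H$ in $\Z^r$. If $s<r$, pick a strictly upper-triangular integer matrix on $\Z^r$ supported on the $(r-s)$-dimensional complement of $H$; lifting this to $\chi_0$ makes $I+n\chi_0$ upper-triangular with $1$'s on the diagonal on the $\Z^r$-part, hence invertible, and a direct calculation then shows $\beta\mapsto\beta\chi_0(\beta)^n$ is a bijection of $\OM$. If $s=r$ but $H\cap T\ne T$, let $\chi_0$ factor through a non-trivial character $\OM/H\to T\subset\OM$, keeping the $\Z^r$-component of $\chi_0$ zero so the bijection condition on the free part is automatic. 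In either case the associated $\phi_{\chi_0}$ is a non-trivial automorphism of $\GLO$ fixing $\bar c$, contradicting $\text{Aut}(\GLO,\bar c)=\{1\}$.

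The main obstacle is the residual \textit{thin} case where $s=r$, $T\le H$, and the finite quotient $\OM/H$ has order coprime to $\exp(T)$ (for instance, a totally real field with $T=\{\pm 1\}$, unit rank $1$, and $[\OM:H]$ odd): here the pure central-twist construction produces no non-trivial $\chi_0$ vanishing on $H$. Handling this case requires combining $\phi_{\chi_0}$ with inner automorphisms by diagonal matrices $d_i(\alpha)$, $\alpha\in\OM$ (which rescale transvections by units and are controlled by the Steinberg relations), and, if necessary, with the contragredient automorphism $g\mapsto (g^T)^{-1}$, in order to produce the required non-trivial element of the stabilizer; this is where the hypothesis $n\ge 3$ and the infinitude of $\OM$ are used most delicately.
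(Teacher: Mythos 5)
Your opening move is correct: if $(\GLO,\bar c)$ were bi-interpretable with $\Z$, then by Theorem~\ref{aut-biint:thm} the group $\text{Aut}(\GLO,\bar c)$ would be trivial, so it suffices to find a non-trivial automorphism of $\GLO$ fixing $\bar c$ pointwise. But this cannot be done for \emph{arbitrary} $\bar c$, and that is a fatal flaw, not merely a "residual thin case."

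Recall that $\GLO$ is a \emph{finitely generated} group ($\SLO$ is boundedly generated by transvections, and $\GLO/\SLO\cong\OM$ is finitely generated by Dirichlet's units theorem). Hence there is a finite tuple $\bar c$ that generates $\GLO$ as a group, and then every automorphism fixing $\bar c$ pointwise is the identity: $\text{Aut}(\GLO,\bar c)=\{1\}$, with no contradiction available. In terms of your central-twist construction this is visible already: for a generating tuple the subgroup $H=\langle \det c_1,\dots,\det c_k\rangle$ equals $\OM$ itself, so there is no non-trivial $\chi_0\in\text{Hom}(\OM,\OM)$ vanishing on $H$. Composing with inner automorphisms or the contragredient $g\mapsto (g^T)^{-1}$ does not help, since these need not fix $\bar c$ either. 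In short, working inside $\GLO$ itself the automorphism-count argument cannot produce a contradiction.

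The paper's proof sidesteps exactly this obstacle. Assuming a bi-interpretation of $(\GLO,\bar c,\bar e)$ with $\Z$, it transfers the bi-interpretation formulas to a non-standard model: take a countable non-standard model $S\models\text{Th}(\Z)$ with only countably many automorphisms and set $R=\OR\otimes_\Z S$. Then $(R,\bar\beta)$ would have to be bi-interpretable with $\GLR$ for the corresponding parameters, forcing $\text{Aut}(\GLR,\cdot)\cong\text{Aut}(R,\bar\beta)$, which is countable. But $\GLR$ is no longer finitely generated, and its center $Z(\GLR)\cong R^\times$ acquires a divisible direct summand isomorphic to $\Q^\omega$; this yields uncountably many homomorphisms $\phi_i\colon d_1(R^\times)\to Z(\GLR)$ fixing the standard copy of $d_1(\OM)$ and hence uncountably many pairwise distinct automorphisms
\[
\psi_i\colon \SLR\rtimes d_1(\RM)\to \SLR\rtimes d_1(\RM),\qquad (x,y)\mapsto (x, y\,\phi_i(y)),
\]
each fixing $\GLO$ elementwise and inducing the identity on $(R,\bar\beta)$. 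The resulting cardinality mismatch is the contradiction. Your automorphisms $\phi_{\chi_0}$ are the "standard" analogues of the $\psi_i$, so the algebraic idea is in the right family; what is missing is the essential model-theoretic step of passing to a non-standard model where these twists proliferate. Without that step the argument cannot close.
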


\begin{proof} Indeed we prove that $\GLO$ and $\OR$ are not bi-interpretable for any finite set of parameters picked in $\GLO$. So to derive a contradiction assume that $(\GLO, \bar{c})$ and $\OR$ are bi-interpretable for a tuple $\bar{c}$ of constants in $\GLO$. Since $\OR$ and $\Z$ are bi-interpretable there are constant $\bar{e}$ in $\GLO$ where $(\GLO,\bar{c},\bar{e})$ and $\Z$ are bi-interpretable. Now take a countable non-standard model $S$ of $\Z$ with countably many automorphisms and a free $\Z$-basis $\bar{\beta}$ of $\OR$. Consider $R=\OR\otimes_\Z S$ and note that since $\Z$ and $(\OR, \bar{\beta})$ are bi-interpretable, $S$ and $R$ are also bi-interpretable, and by Theorem~\ref{aut-biint:thm} there are only countably many automorphisms of $R$ fixing $\bar{\beta}$. Since $G'$ is uniformly definable in $G$, as the subgroup of products of commutators of width $w(n,\OR)$, and $G$ is bi-interpretable with $\OR$, all subgroups $T_{ij}$ are definable in $(\GLO,\bar{c})$. In particular $\OR$ is interpreted on $T_{ij}$ with the help of the constants $\bar{c}$. Moreover, for $R$ defined above, $R$ and $\GLR$ are bi-interpretable with the same constants as in the bi-interpretation of $\GLO$ and $\OR$. 
	
We note that for an element $\alpha$ of infinite order in the group of units $\OM$, which is finitely generated by Dirichlet's Units Theorem as an abelian group, the cyclic subgroup $\alpha^\Z\define\{\alpha^b:b\in \Z\}$ is definable in $\OR$ with some parameters. This is because for a $\Z$-basis of $\OR$ the integer exponentiation of $\alpha$ is computed by recursive functions in each coordinate, therefore it is arithmetic in each coordinate, since $\OR$ with these parameters is bi-interpretable with $\Z$. This also endows $\alpha^\Z$ with a ring structure isomorphic to $\Z$. 

Now, there is a unique element $y$ of $T_{ij}$ such that $d_i(\alpha)t_{ij}d_i(\alpha)^{-1}=y$, and we denote it for obvious reasons by $t_{ij}(\alpha)$. Therefore, the set $t_{ij}(\alpha^\Z)$ along with its ring structure are definable in $G$. Hence, $H_i=d_i(\alpha^\Z)\cdot Z(G)$ is definable in $G$ as follows: $h\in H_i$ if and only if $h^{-1} t_{ij} h \in t_{ij}(\alpha^\Z)$, for all $i\neq j$, and $h^{-1} t_{kj} h=1$ if $k\neq i$ and $j\neq i$. So $H_i$ is definable in $G$ via bi-interpretability of $G$ and $\Z$. Moreover, $H_i/Z(G)$ inherits an arithmetic structure, which is interpretable in $G$. 

Now, working in $G^*=\GLR$, the same formulas as above define subgroup $H_i^*$ of $G^*$, where $H^*/Z(G^*)\cong \langle S,+,\cdot \rangle$. Since $S$ is a countable non-standard model of $\Z$ its additive groups splits as $A \oplus D$, where $D\cong \Q^\omega$, $\Q$, the additive group of rational fractions. The group $\Q^\omega$ is a divisible abelian group and splits from the rest of the abelian subgroup $d_i(R^\times)$. By a similar argument $Z(G^*)=\{\prod_{i=1}^nd_i(\alpha): \alpha \in R^\times\}$ contains subgroups $A'$ and  $D'\cong \Q^\omega$, where $Z(G^*)\cong A'\oplus D'$. Since there are uncountably many distinct homomorphisms from $\Q^\omega$ onto $\Q^\omega$, there exist uncountably many distinct non-trivial homomorphisms $\phi_i: d_1(R^\times) \to Z(G^*)$ fixing the standard copy of $d_1(\OM)$ in $d_1(R^\times)$. For each such $i$ define a map:
  $$\psi_i:\SLR \rtimes d_1(\RM) \to  \SLR \rtimes d_1(\RM), \quad (x,y) \mapsto (x, y \phi_i(y)).$$
  $\psi_i's$ are pairwise distinct and each is a non-trivial automorphism of $\GLR\cong  \SLR \rtimes d_1(\RM)$, fixing $\GLO$ elementwise, inducing identity on all $T_{ij}$ and hence on $(R,\bar{\beta})$. Recall that by our hypothesis and Theorem~\ref{aut-biint:thm}, there has to exist a one-one correspondence between automorphisms of $(\GLR, \bar{c}, \bar{e})$ and automorphisms of $(R,\bar{\beta})$, where the former is uncountable, but the latter is countable. Contradiction!  \end{proof} 
\subsection{ Models of the complete theory of $\GLO$}
Here we prove that all models of the first-order theory of $\GLO$ are of the type $\text{GL}_n$. We invite the reader to compare the result with the case of $\TO$.

\begin{thm}If $H$ is any group such that $H\equiv \GLO$ then $H\cong \GLR$ for some ring $R\equiv \OR$. \end{thm}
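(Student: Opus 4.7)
Because $\GLO$ is not bi-interpretable with $\Z$ when $\OM$ is infinite, we cannot directly mimic the argument of Theorem~\ref{sln-elemchar:thm}. The strategy instead is to pass to the commutator subgroup $G' := [\GLO,\GLO]$, which equals $\SLO$ and \emph{is} bi-interpretable with $\OR$, and then to reassemble the full group structure of $H$ from there.

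First, $\SLO = [G,G]$ is absolutely and uniformly definable in $G = \GLO$. Indeed, $G/\SLO \cong \OM$ is abelian, so $[G,G]\subseteq \SLO$; conversely the Steinberg identity $t_{ij}(\alpha) = [t_{ik}(\alpha), t_{kj}]$ (valid for $n\geq 3$) writes each transvection as a commutator, and Carter--Keller bounded generation of $\SLO$ by transvections of some width $w = w(n,\OR)$ writes every element of $\SLO$ as a product of at most $w$ commutators in $G$. Hence the formula $\Phi(x) := \exists \bar y,\bar z\,(x = \prod_{k=1}^{w} [y_k, z_k])$ defines $\SLO$ in $G$. For any $H \equiv G$, $\Phi$ then defines $H' := [H,H]\leq H$, and $H' \equiv \SLO$ as a pure group. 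By Theorem~\ref{sln-elemchar:thm} we obtain $H' \cong \SLR$ for some ring $R \equiv \OR$, and by Theorem~\ref{sln-biinter:thm} the ring $R$ (together with the transvections $t_{ij}(\beta)$ and the one-parameter subgroups $T_{ij}$ of $\SLR$) is interpretable in $H'$, hence in $H$, with parameters. The standard $n^2$-dimensional matrix interpretation of $\GLR$ over the interpreted $R$ then yields a definable isomorphic copy $\widetilde{\GLR}$ of $\GLR$ inside $H$.

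To produce the isomorphism $H \cong \widetilde{\GLR}$ we transfer from $G$ the following first-order sentences (all expressible via the interpretation of $\OR$ in $G'$): every $g \in G$ admits a decomposition $g = s\cdot d$ with $s \in G'$ and $d$ characterized, modulo $Z(G)$, by commuting with every $t_{ij}$ for $i,j\geq 2$ and acting on $T_{1j}$ (resp.\ $T_{i1}$) as multiplication by a unit $\alpha \in \OM$ (resp.\ by $\alpha^{-1}$); this $\alpha$ is uniquely determined by $g$; and the resulting encoding $g\leftrightarrow (s,\alpha)$ intertwines the group law of $G$ with matrix multiplication in $\widetilde{\GLO}$. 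In $H$ the same sentences produce an encoding $h \leftrightarrow (s,\alpha) \in H' \times \RM$, with $\RM$ interpreted in $H$ via $R$, and a group isomorphism $H \cong \widetilde{\GLR} \cong \GLR$. The identification $Z(H)\cong \RM$ needed to eliminate the ``modulo $Z$'' ambiguity is inherited from the definable isomorphism $Z(G)\cong \OM$ in $G$.

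The main obstacle is that the full assembly map $G \to \widetilde{\GLO}$ is \emph{not} defined by a single formula in $G$---such definability would amount to bi-interpretability of $\GLO$ with $\Z$, which fails when $\OM$ is infinite. We bypass this by building the map from the bi-interpretable piece $G'=\SLO$ together with the first-order existence and characterization (modulo the center) of diagonal elements $d_1(\alpha)$, reconciling the residual center ambiguity via the interpretation of $\RM$ in $R$. The essential structural rigidity that prevents any abelian deformation (in contrast to the $\TO$ case) is the non-abelian conjugation action of $\SLR$ on itself, which forces the central extension giving $H$ over $H/Z(H)$ to coincide with the standard central extension defining $\GLR$.
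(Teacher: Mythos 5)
Your first half tracks the paper closely and is essentially correct: $G' = \SLO$ is uniformly definable in $G=\GLO$ via bounded commutator width (Carter--Keller plus the Steinberg identity), so $\Phi$ defines $H'$ in any $H\equiv G$, and Theorem~\ref{sln-elemchar:thm} gives $H'\cong\SLR$ for some $R\equiv\OR$, with $R$ interpretable in $H$ via Theorem~\ref{sln-biinter:thm}. You also correctly isolate the obstruction: the diagonal (torus) part of $H$ is pinned down only modulo $Z(H)$, so \emph{a priori} $H$ could be an abelian deformation in exactly the sense of Section~\ref{abdef:sec}, and the whole content of the theorem is ruling this out.

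It is precisely at that point that your proof has a real gap. You assert that ``the non-abelian conjugation action of $\SLR$ on itself'' forces the central extension defining $H$ to be the standard one, but no argument is given and this is not the operative mechanism: deforming the torus by a symmetric $2$-cocycle does not change the induced action on $H'$ at all (the action factors through $E_n/Z(H)$), so the conjugation action by itself places \emph{no} constraint on the cohomology. The paper's proof instead runs a genuine cohomological computation. Writing the subgroups $\Delta_i(H)=d_i(\RM)\cdot Z(H)$ as CoT extensions of $Z(H)\cong\RM$ by $\RM$ with defining cocycles $f_i$, Remark~\ref{cocycles:rem} gives $f_n\equiv f_1^{-1}\cdots f_{n-1}^{-1}$. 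The crucial extra input, which has no analogue in $\TO$, is the explicit Steinberg-type identity
\[
d_i(\beta)d_j(\beta^{-1})=t_{ij}(\beta)\,t_{ji}(-\beta^{-1})\,t_{ij}(\beta)\,t_{ij}(-1)\,t_{ji}(1)\,t_{ij}(-1),
\]
which exhibits $d_i(\beta)d_j(\beta^{-1})$ as an element of $G'$ in a first-order way. Transferring this to $H$ shows the subset $\Delta_{ij}=\{d_i(\beta)d_j(\beta^{-1}):\beta\in\RM\}$ is a subgroup of $E_n$ meeting $Z(H)$ trivially; a short $2$-cocycle calculation then yields $f_i\equiv f_j$ for all $i\neq j$, whence $f_i^n\equiv\one$, and finally $f_i\equiv\one$ because the $f_i$ are CoT. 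This is exactly what fails for $\TO$: there the diagonal elements $d_i(\beta)d_j(\beta^{-1})$ do \emph{not} lie in $\UTO$, so no such first-order constraint is available, and abelian deformations genuinely occur. Your proof needs this step (or an equivalent replacement) to be complete.
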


\begin{proof} Let $G=\GLO$. Since $G'=\SLO$ and $\SLO$ is boundedly generated, $G'$ is uniformly definable in $G$. By Theorem~\ref{sln-elemchar:thm} if $H\equiv G$, then $H'\cong \SLR$ for a ring $R\equiv \OR$. Subgroups $\Delta_i=d_i(\OM)\cdot Z(G)$ are definable in $G$ similar to the proof of Theorem~\ref{TnMain:thm}. Indeed the same formulas define $d_i(\RM)\cdot Z(H)$ in $H$ where $Z(H)\cong \RM$. The action of $\Delta_i(R)$ on $\SLR$ is also an extension of the action of $\Delta_i(\OR)$ on $\SLO$. Now to prove the theorem we need to prove that torus $E_n=\Delta_1\cdots \Delta_n$ is actually a split torus, i.e. $E_n\cong d_1(\RM)\times \cdots \times d_n(\RM)$, or equivalently that $f_i\equiv 1$ for all $i$. By Remark~\ref{cocycles:rem}, we note that CoT 2-cocycles $f_i$ defining $\Delta_i(R)$ satisfy 
	\begin{equation}\label{cocyc-prod:eqn} f_n\equiv f_1^{-1}\cdots f_{n-1}^{-1}\end{equation}
	
	 On the other hand the following relations hold in $G$:
	 
	 $$d_i(\beta)d_j(\beta^{-1})=t_{ij}(\beta)t_{ji}(-\beta^{-1})t_{ij}(\beta)t_{ij}(-1)t_{ji}(1)t_{ij}(-1)), \quad i\neq j$$
	 
	Hence, there exists an $\Lgroups$-sentence which holds in $H$ and implies in $H$ that the set $\Delta_{ij}$ of elements $d_i(\beta)d_j(\beta^{-1})$ as $\beta$ ranges over $\RM$ is indeed a subgroup of $E_n$ which intersects $Z(H)$ trivially. A couple of routine calculations with symmetric 2-cocycles reveals that $$f_i(\alpha,\beta)f_i(\alpha^{-1},\beta^{-1})\equiv f_i^{-1}(\alpha^{-1}\beta^{-1},\alpha\beta)f_i(\alpha^{-1},\alpha)f_i(\beta^{-1},\beta)\equiv \one$$
	This together  with the fact that $\Delta_{ij}$ splits over $Z(H)$ implies that \begin{equation} \label{cohom-ij:eqn} f_i\equiv f_j, \quad i\neq j\end{equation}
	Now, Equations~\eqref{cocyc-prod:eqn} and \eqref{cohom-ij:eqn} imply that for any $i$
		$$f_i^{n}\equiv \one$$ 
	implying that $f_i\equiv \one$, since $f_i$'s are CoT. \end{proof}

\end{document}